\newtheorem{theorem}{Theorem}[section]
\newtheorem{corollary}[theorem]{Corollary}
\newtheorem{proposition}[theorem]{Proposition}
\newtheorem{lemma}[theorem]{Lemma}
\newtheorem{remark}[theorem]{Remark}
\newcommand{\be}{\begin{equation}}
\newcommand{\ee}{\end{equation}}
\newcommand{\ben}{\begin{equation*}}
\newcommand{\een}{\end{equation*}}
\newcommand{\bfeins}{{\bf 1}}
\newcommand{\veps}{\varepsilon}
\newcommand{\topr}{\stackrel{\mathrm{P}}{\longrightarrow}}
\newcommand{\todr}{\stackrel{\mathrm{D}}{\longrightarrow}}
\newcommand{\eqdr}{\stackrel{\mathrm{D}}{=}}
\newcommand{\R}{\Bbb{R}}
\newcommand{\N}{\Bbb{N}}
\newcommand{\rmd}{{\rm d}}
\newcommand{\rmi}{{\rm i}}
\newcommand{\RR}{\mathbb{R}}
\newcommand{\wt}{\widetilde}
\newcommand{\wh}{\widehat}
\newcommand{\dto}{\downarrow}
\newcommand{\pibar}{\overline{\Pi}}
\newcommand{\pibarinv}{\overline{\Pi}^{\leftarrow}}
\newcommand{\pibarpinv}{\overline{\Pi}^{+,\leftarrow}}
\newcommand{\pibarminv}{\overline{\Pi}^{-,\leftarrow}}
\newcommand{\pibarpminv}{\overline{\Pi}^{\pm,\leftarrow}}
\newcommand{\thetabar}{\overline{\Theta}}
\newcommand{\lambar}{\overline{\Lambda}}
\newcommand{\laminv}{\overline{\Lambda}^{\leftarrow}}
\newcommand{\lampinv}{\overline{\Lambda}^{+,\leftarrow}}
\newcommand{\lamminv}{\overline{\Lambda}^{-,\leftarrow}}
\newcommand\MPCPS{\textit{Math. Proc. Camb. Phil. Soc.}}
\newcommand\AOS{\textit{Ann. Statist.}}
\begin{document}

\title{Tightness and Convergence of Trimmed L\'evy Processes to Normality at Small Times}

\author{Yuguang Fan \footnote{Email : \href{mailto:yuguang.fan@unimelb.edu.au}{yuguang.fan@unimelb.edu.au}} \\
{\normalsize ARC Center of Excellence for Mathematical and Statistical Frontiers.}\\
{\normalsize School of Mathematics and Statistics,
The University of Melbourne, Australia.}
 }

\maketitle

\begin{abstract}
For non-negative integers $r$, $s$, let $^{(r,s)}X_t$ be the L\'evy process $X_t$ with the $r$ largest positive jumps and the $s$ smallest negative jumps up till time $t$ deleted, and let $^{(r)}\wt X_t$ be $X_t$ with the $r$ largest jumps in modulus up till time $t$ deleted. Let $a_t \in \R$ and $b_t>0$ be non-stochastic functions in $t$.  We show that the tightness of $({}^{(r,s)}X_t - a_t)/b_t$  or $({}^{(r)}\wt X_t - a_t)/b_t$ as $t\dto 0$  implies the tightness of all normed ordered jumps,  hence the tightness of the untrimmed process $(X_t -a_t)/b_t$ at $0$. We use this to deduce that the trimmed process $({}^{(r,s)}X_t - a_t)/b_t$  or $({}^{(r)}\wt X_t - a_t)/b_t$
converges to $N(0,1)$ or  to a degenerate distribution as $t\dto 0$ if and only if $(X_t-a_t)/b_t $ converges to $N(0,1)$ or to the same degenerate distribution, as $t \dto 0$.
\end{abstract}

{\bf 2010 Mathematics Subject Classification:} 60G05, 60G07, 60G51.

\section{Introduction}
L\'evy processes can be seen as continuous time analogues of random walks. Historically motivated by robust statistics, the concept of trimming has been thoroughly explored in the random walks setting to assess the effect of outliers. Here we construct an analogous process by removing a finite number of largest jumps from a L\'evy process. For large time behaviour, i.e. as $t\to \infty$, the trimmed L\'evy process exhibits a similar structure to the trimmed sums of independent and identically distributed random variables. 
In this paper, however, we are concerned with small time convergence properties. As $t \to \infty$, an increasing number of jumps with bigger magnitude come into consideration for being removed, but as $t \dto 0$, jumps of bigger size are gradually excluded from being deleted.
This makes trimming at small times a non-trivial effort with no exact random walk analogy. 

The idea of removing jumps from a L\'evy process is not at all new. Rosi\'nski \cite{ros2001} made use of ``thinning'' to generate one L\'evy process from another by removing jumps stochastically. In comparison, the  processes introduced by  Buchmann, Fan and Maller \cite{bfm14} have a more deterministic flavour; jumps  are removed according to their sizes. 
The resulting trimmed processes no longer have independent stationary increments, hence are not L\'evy processes. But their distributions can be written as mixtures of a truncated infinitely divisible distribution with a gamma random variable. This was done in \cite{bfm14}, where representation formulae for the distribution of trimmed processes joint with their order statistics and quadratic variation were derived. 
These representations permit techniques for L\'evy processes to be carried over to the trimmed processes. In Section \ref{sect:trun}, as preparatory material for the proofs of the main results, we revisit and extend the results in \cite{bfm14} to asymmetrical trimming. 

The focus of the present paper is the small time domain of attraction problem for L\'evy processes, which has recently received much attention.
Maller and Mason \cite{MM2010} gave necessary and sufficient conditions on the canonical measure of a L\'evy process for it to converge,  after appropriate centering and norming, to a stable law as $t\dto 0$. 
(See also Maller and Mason \cite{MM2010} and \cite{MM08}, and de Weert \cite{deW2003}.)
The question then arises as to the effect of removing large jumps of the process on this kind of convergence. Fan \cite{fan2014st} investigated this for the case of attraction to a non-normal stable law as $t\dto 0$,
and gave a complete characterisation.
There it was shown  that ``lightly trimmed" L\'evy processes, i.e., after trimming off a finite number of large jumps, converge at small times with appropriate centering and norming to a non-degenerate non-normal law if and only if the original L\'evy process converges to an almost surely finite,
 non-degenerate, non-normal, limit random variable. 

The purpose of the present paper is to extend this result to the case of a normal limit, where again a complete characterisation can be given: 
light trimming produces a normal limiting distribution if and only if the process is in the domain of attraction of the normal, as $t\dto 0$.
 Taken together with the results of \cite{fan2014st}, this provides a complete solution to the  domain of attraction problem for trimmed L\'evy processes.

Our findings can be seen as a continuation of the classical precedent in random walks, and we borrow from a rich repertoire of ideas in the random walk literature.
It has been shown there that the convergence of normed, centered random walk to a finite, non-degenerate random variable implies the convergence of the lightly trimmed sum (see for example Darling \cite{darling52}, Hall \cite{hall78}  and Mori \cite{mori84}). However, the converse is known to be a much harder problem. Maller \cite{mal82} first gave a partial converse by showing that when the trimmed sum 
converges to normality, under the assumption of a continuous and symmetrically  distributed increment, the untrimmed sum also converges to normality. Mori \cite{mori84} completed the proof for the general case without extra assumptions for asymptotic normality, only, and admitted the difficulties in proving a similar result for a non-normal limit.  

In 1993, Kesten \cite{kesten93} proved the most general case by showing that the convergence in distribution of normed and centered lightly trimmed and untrimmed random walks $S_n$ are equivalent as $n \to \infty$. In Fan \cite{fan2014st} we extended the Kesten analysis to the small time L\'evy domain, thus characterising the trimmed domain of attraction for non-normal laws.
 Although motivated by Kesten's method, some quite different techniques had to be developed to deal with the small time convergences.
    These results apply to a wide class of processes of practical interest which have non-trivial domains of attraction, for example, the tempered stable processes, Lamperti stable processes and the normal variance-mean mixture processes. We refer to \cite{BN1977}, \cite{BN1998}, \cite{caballero2008} and Fan \cite{fan_thesis} for further details. 
   
The present paper completes the picture with the non-normal limits.
 Our main results are in Theorems \ref{tighteqv} and \ref{convNc} below. They analyse the effect of light trimming 
 on the tightness as well as the asymptotic normality at $0$ of a normed and centered L\'evy process.

 \section{Main Results}
 Our setup is as follows. Let $(X_t)_{t\ge 0}$ be a real valued L\'evy process with canonical triplet $(\gamma,\sigma^2,\Pi)$,
thus having characteristic function $Ee^{\rmi \theta X_t}= e^{t\Psi(\theta)}$, $t\ge 0$,
$\theta \in \R$, with characteristic exponent
\ben\label{ce}
\Psi(\theta):= \rmi \theta \gamma - \frac 1 2 \sigma^2 \theta^2 +\int_{\R}
\left(e^{\rmi \theta x}-1-\rmi \theta x{\bf 1}_{\{|x|\le 1\}}\right)
\Pi(\rmd x),
\een
where $\gamma\in\R$, $\sigma^2\ge 0$. 
 Here $\Pi$ is a Borel measure on $\RR_* : = \RR\setminus \{0\}$ with $\int_{\R^*} 1 \wedge x^2 \Pi(\rmd x) < \infty$  and $\Pi((-x,x)^c)<\infty$ for all $x>0$.

Denote the jump process of $X$ by $(\Delta X_t)_{t\ge 0}$, where $\Delta X_t = X_t-X_{t-}$, $t>0$, with  $\Delta X_0 \equiv 0$. In particular, denote the positive jumps by $\Delta X_t^+ = \Delta X_t \vee 0$ and the magnitudes of the negative jumps by $\Delta X_t^- = (-\Delta X_t)\vee 0$.
Note that $(\Delta X_t^+)_{t\ge 0}$ and $(\Delta X_t^-)_{t \ge 0}$ are non-negative independent processes. 
For any integers $r,s  > 0$, let $\Delta X_t^{(r)}$ be the $r^{th}$ largest positive jump, and let $\Delta X_t^{(s),-}$ be the $s^{th}$ largest jump in $\{\Delta X_s^{-}, 0<s\le t\}$, which corresponds to the magnitude of the $s^{th}$ smallest negative jump. We further write $\wt{\Delta X}_t^{(r)}$ to denote the $r^{th}$ largest jump in modulus up to time $t$.  In what follows, $\Delta X_t^{(r),+}$ and $\Delta X^{(r)}_t$ are written interchangeably.  For a formal definition of the ordered jumps, allowing tied values, we refer to Buchmann et al. \cite{bfm14} Section 2.1.
The trimmed versions of  $X$ are defined as
\begin{equation}\label{trims}
{}^{(r, s)} X_t:= X_t- \sum_{i=1}^r {\Delta X}_t^{(i)} + \sum_{j=1}^s \Delta X_t^{(j),-}
 \quad {\rm and} \quad
{}^{(r)}\wt X_t:= X_t- \sum_{i=1}^r \wt{\Delta X}_t^{(i)},
\end{equation} which are termed asymmetrical trimming and modulus trimming respectively. 
By letting $r = 0$ or $s = 0$ in asymmetrical trimming, we obtain one-sided trimmed processes, 
\begin{equation}\label{1s}
 {}^{(r)} X_t:= X_t- \sum_{i=1}^r {\Delta X}_t^{(i)} ,
 \quad {\rm and} \quad
{}^{(s,-)} X_t:= X_t + \sum_{i=1}^s {\Delta X}_t^{(i),-}.
\end{equation}  

Set 
\[{}^{(0,0)}X_t = {}^{(0)} \wt X_t ={}^{(0)}  X_t = {}^{(0,-)}  X_t = X_t. 
\]
The positive, negative and two-sided tails of L\'evy measure $\Pi$ are
\ben\label{pidef}
\pibar^+(x):= \Pi\{(x,\infty)\},\ \pibar^-(x):= \Pi\{(-\infty,-x)\},\
{\rm and} \ \pibar(x):=\pibar^+(x)+\pibar^-(x), \ x>0.
\een
The restriction of $\Pi$ on $(0,\infty)$ is $\Pi^+$. Let $\Pi^-(\cdot) = \Pi(-\cdot)$ and $\Pi^{|\cdot|}  = \Pi^+ + \Pi^-$.
For each $x > 0$, denote the truncated mean and second moment functions by 
\begin{equation*}\label{vdef}
 \nu(x) = \gamma - \int_{x< |y|\le 1} y \Pi(\rmd y), \quad \text{and } \quad V(x) = \sigma^2 + \int_{|y|\le x} y^2 \Pi(\rmd y).
\end{equation*}

Throughout the paper, we assume $\pibar(0+) = \infty $ when dealing with modulus trimming and $\pibar^+(0+) = \infty$ or $\pibar^-(0+) = \infty$ (or both when appropriate) when dealing with one-sided trimming. In particular, these mean $V(x) > 0$ for all $x > 0$, and they ensure there are infinitely many jumps $\Delta X_t$, $\Delta X_t^\pm$, a.s., in any bounded interval of time. 

Analytical conditions for a L\'evy process to be in the domain of attraction of a normal law as $t \dto 0$ or $t \to \infty$ were studied in Doney and Maller \cite{dm02}. $X_t$ is said to be in the \emph{domain of attraction of the normal law} at $0$ if there exist some centering and norming functions $a_t \in \R$ and $b_t > 0$ such that
\be\label{DN}
\frac{X_t-a_t}{b_t} \to N(0,1)  \quad \text{as} \quad  t \dto 0  ,  \quad \text{equivalently, if }  \quad 
\frac{x^2\pibar(x)}{V(x)} \to 0 \quad \text{as} \quad x \dto 0. 
\ee
Here $N(0,1)$ denotes a standard normal random variable.

When \eqref{DN} holds, the norming function $b_t$ is regularly varying with index $1/2$ at $0$ and the truncated second moment function $V(x)$ is slowly varying at $0$. For the definition and properties of regular variation, we refer to \cite{BGT87}. 
At small times, the centering function $a_t$ can be chosen to be $0$, i.e. $X_t$ in the domain of attraction of the normal law ($X_t \in D(N)$) at $0$ is equivalent to $X_t$ in the\emph{ centered domain of attraction of the normal law} $(X_t \in D_0(N))$ at $0$ (see Maller and Mason \cite{MM2010}). 

For given non-stochastic functions $a_t \in \R$ and $b_t > 0$, abbreviate the various centered and normed processes as 
\begin{equation*}\label{cntrim}
S_t := \frac{X_t-a_t}{b_t} , \quad 
  {}^{(r,s)}  S_t := \frac{{}^{(r,s)}  X_t-a_t}{b_t}\quad \text{and}\quad
 {}^{(r)} \wt S_t := \frac{{}^{(r)} \wt X_t-a_t}{b_t} .
\end{equation*}
Also denote the one-sided versions (refer to \eqref{1s}) as
\[ {}^{(r)}S_t : = \frac{{}^{(r)} X_t-a_t}{b_t} \quad \text{and} \quad  {}^{(s,-)}S_t : = \frac{{}^{(s,-)}  X_t-a_t}{b_t} .
\]

We pursue a compactness argument by first proving that $(S_t)$ is relatively compact as $t \dto 0$ if $^{(r,s)}S_t$ or $^{(r)}\wt S_t$ is. This will imply that each subsequence of $(S_t)$ has a further subsequence convergent in distribution. Then we establish that each convergent subsequence has to converge to the same limit when $^{(r,s)}S_t$ or $^{(r)}\wt S_t$ has a normal or degenerate limit as $t \dto 0$. 


The idea of the proof is inspired by Mori \cite{mori84} in the random walks literature, but we apply it to the continuous setting in the small time framework where some notable differences occur. 
Before proving the asymptotic normality result, we establish equivalent conditions for a sequence of normed and centered L\'evy processes to be relatively compact. Since we are dealing with $X_t$ on the real line, we can instead prove that, if $^{(r,s)}S_t$ or $^{(r)}\wt S_t$ is tight at $0$, then $S_t$ is tight at $0$, i.e.
\[ \lim_{x \to \infty} \limsup_{t\dto 0}P(|S_t| > x ) = 0. 
\]

Henceforth we state theorems for both asymmetrical and modulus trimmed processes but only give detailed proofs for one type of trimming. All statements are also true for one-sided trimmed processes, as special cases of the asymmetrical trimmed process by taking either $r=0$ or $s =0$. 
Theorem \ref{tighteqv} gives a thorough characterisation of the tightness of trimmed processes, the ordered jumps and the untrimmed process.


\begin{theorem}\label{tighteqv}
{\rm (a)} Fix $r = 0,1,2, \ldots$ and $s = 0 ,1,2, \ldots$. Suppose that $({}^{(r,s)}S_t)$ is tight as $t\dto 0$ for some $a_t \in \R$ and $b_t > 0$. Then the following hold.
\begin{enumerate}[\rm (i) ]

\item \label{ordersrp}
 $(\Delta X_t^{(j)}/b_t)$ is tight at $0$ for all $j \in \N$, equivalently, 
 \[\lim_{x\to \infty} \limsup_{t\dto 0} t\pibar^+(x b_t)  = 0.\]

\item \label{ordersrm}
 $(\Delta X_t^{(k), -}/b_t)$ is tight at $0$ for all $k \in \N$, equivalently, 
 \[
 \lim_{x\to \infty} \limsup_{t\dto 0} t\pibar^-(x b_t)  = 0.
 \]

\item\label{rst}
 $(^{(j)}S_t)$ is tight at $0$ for all $j = 1, 2, \ldots$.
\item \label{st-}
 $({}^{(k,-)}S_t)$ is tight at $0$ for all $k = 1, 2, \ldots$.

\item \label{st}
 $(S_t)$ is tight at $0$.
 
\end{enumerate}

{\rm (b)} Suppose $({}^{(r)}\wt S_t)$ is tight at $0$ for some $a_t \in \R$ and $b_t > 0$. Then $(S_t)$ is tight at $0$,  
 $(\wt{\Delta X}_t^{(j)}/b_t)$ is tight at $0$ for some (hence all) $j \in \N$ and $\lim_{x\to \infty} \limsup_{t\dto 0} t\pibar(x b_t)  = 0$. 

%
\end{theorem}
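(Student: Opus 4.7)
I focus on part (a); part (b) is parallel, with modulus jumps and the two-sided tail $\pibar$ in place of $\pibar^{\pm}$. The internal equivalences in (i) and (ii) are structural facts about the Poisson point process of jumps, independent of trimming. Since the count of positive jumps of size larger than $y$ by time $t$ is $\mathrm{Poisson}(t\pibar^+(y))$, we have $P(\Delta X_t^{(j)} > y) = P(\mathrm{Poi}(t\pibar^+(y)) \ge j)$. As $\Delta X_t^{(j)} \ge 0$, tightness of $\Delta X_t^{(j)}/b_t$ amounts to $\lim_{x\to\infty}\limsup_{t\dto 0}P(\Delta X_t^{(j)} > xb_t) = 0$, and a routine manipulation of Poisson tails shows this holds for some $j \ge 1$ if and only if $\lim_{x\to\infty}\limsup_{t\dto 0} t\pibar^+(xb_t) = 0$, a condition independent of $j$. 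The analogue for negative jumps yields (ii).

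The core of the proof is to show that tightness of $({}^{(r,s)}S_t)$ forces $\lim_{x\to\infty}\limsup_{t\dto 0}t\pibar^+(xb_t) = 0$ and its negative counterpart. I argue by contradiction: suppose there exist $\epsilon > 0$ and sequences $x_n \to \infty$, $t_n \dto 0$ with $t_n\pibar^+(x_n b_{t_n}) \ge \epsilon$. A Poisson tail bound supplies $\delta > 0$ with $P(\Delta X_{t_n}^{(r+1)} > x_n b_{t_n}) \ge \delta$, and on this event the $(r+1)$-th largest positive jump remains inside $^{(r,s)}X_{t_n}$. Using the mixture representation of $^{(r,s)}X_t$ developed in Section~\ref{sect:trun} (extending \cite{bfm14} to asymmetric trimming), I condition on $\Delta X_{t_n}^{(r+1)} = y$ with $y > x_n b_{t_n}$: the conditional law of $^{(r,s)}X_{t_n}$ decomposes as $y$ plus an independent random variable built from the Gaussian component, the drift, the positive jumps of size at most $y$, and the negative jumps with the top $s$ excised. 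On the scale $b_{t_n}$ the deterministic shift $y/b_{t_n}$ is at least $x_n \to \infty$, so tightness of $({}^{(r,s)}S_{t_n})$ combined with Bayes' rule forces the conditional law of the remainder, divided by $b_{t_n}$, to concentrate in a fixed window around $-y/b_{t_n} + a_{t_n}/b_{t_n}$ with probability bounded away from zero, uniformly as $y \to \infty$. This is ruled out by a concentration function estimate for infinitely divisible laws in the spirit of Kesten~\cite{kesten93}, yielding the required contradiction. A mirror argument handles $\pibar^-$.

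With (i) and (ii) in hand, the remaining items are consequences: every ordered positive jump $\Delta X_t^{(i)}/b_t$ and every $\Delta X_t^{(j),-}/b_t$ is tight, so the identity
\[
S_t = {}^{(r,s)}S_t + b_t^{-1}\sum_{i=1}^{r}\Delta X_t^{(i)} - b_t^{-1}\sum_{j=1}^{s}\Delta X_t^{(j),-}
\]
gives (v), while $^{(j)}S_t = S_t - b_t^{-1}\sum_{i=1}^{j}\Delta X_t^{(i)}$ yields (iii), with (iv) symmetric. Part (b) is handled identically once $\wt{\Delta X}_t^{(r+1)}$ and $\pibar$ replace $\Delta X_t^{(r+1)}$ and $\pibar^+$. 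The principal obstacle is the concentration step: one must quantify the conditional spread of the remainder sharply enough to preclude concentration in a window of size $b_{t_n}$ around a shift of order $x_n b_{t_n}$ with $x_n \to \infty$, uniformly in $n$. It is here that the small-time Lévy setting departs from Mori's random-walk argument and where the structural representations of Section~\ref{sect:trun} are essential.
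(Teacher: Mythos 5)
Your outer reductions agree with the paper's and are correct: the equivalence in (i)--(ii) between tightness of the normed ordered jumps and $\lim_{x\to\infty}\limsup_{t\dto 0}t\pibar^{\pm}(xb_t)=0$ is the Poisson-tail computation of Lemma \ref{rJ} (with the small extra step, which the paper spells out, of first excluding $t_k\pibar^+(xb_{t_k})\to\infty$ along a subsequence before invoking the lower bound $e^{-\lambda}\lambda^{r+1}/(r+1)!$, since that bound degenerates as $\lambda\to\infty$), and the passage from (i)--(ii) to (iii)--(v) via ``sums of tight families are tight'' is exactly the paper's closing argument.

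The gap is in the core step, and it is not a technicality: the ``concentration function estimate for infinitely divisible laws'' that is supposed to produce the contradiction is the entire difficulty, and in the form you state it --- an anti-concentration bound for the conditional law of the remainder at scale $b_{t_n}$ --- it is false. The theorem must cover processes for which $S_t$ itself converges to a \emph{degenerate} limit (this case is treated explicitly in Theorem \ref{convNc}), so the truncated remainder $R_y/b_{t_n}$ can collapse to a point; nothing in the hypotheses prevents it from sitting in a window of width $O(b_{t_n})$ around $a_{t_n}-y$. The fluctuation that actually contradicts tightness does not come from the spread of the remainder but from the large-jump configuration itself: under your contradiction hypothesis $t_n\pibar^+(x_nb_{t_n})\ge\varepsilon$, the number of positive jumps exceeding $x_nb_{t_n}$ is Poisson with mean bounded away from $0$, so (when that mean stays bounded) the events of exactly $r+1$ and exactly $r+2$ exceedances both have probability bounded below, and on the latter ${}^{(r,s)}X_{t_n}$ retains an extra jump of size at least $x_nb_{t_n}$, i.e.\ the trimmed process shifts by at least $x_n\to\infty$ in units of $b_{t_n}$. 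The paper captures precisely this source of randomness, with no case analysis, through Proposition \ref{keyIn}: symmetrizing kills the centering $a_t$, L\'evy's maximal inequality reduces the tail of the symmetrized truncated-plus-ties process to that of its largest jump, $1-e^{-t\thetabar(4x)}$, and the key point is that $\thetabar^+(4x)=\pibar^+(4x)-v$ on $\{4x<\pibarpinv(v)\}$, so the jumps just below the truncation level survive; integrating against the Gamma mixing density then yields $4P(|{}^{(r,s)}X_t-a_t|>x)\ge P(\Delta X_t^{(r+1)}>4x)$ directly. Your route could likely be repaired by replacing the anti-concentration claim with this counting argument on the exceedances, but as written the decisive step is unsupported and its literal form has counterexamples.
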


With the help of Theorem \ref{tighteqv} we can prove our main result in Theorem \ref{convNc}.

\begin{theorem}\label{convNc}
Suppose $\pibar(0+) = \infty$. There exist non-stochastic functions $a_t$ and $b_t > 0$ such that, as $t \dto 0$, for any $r,s \in \N$,
\begin{equation}\label{convNm1}
\frac{X_t-a_t}{b_t} \todr N(0, 1) \ \text{ {or a degenerate distribution}},
\end{equation}
if and only if

\begin{equation}\label{convNm+}
\frac{^{(r,s)} X_t-a_t}{b_t} \todr N(0,1) \ \text{{or a degenerate distribution}},
\end{equation}
or equivalently,
\begin{equation*}\label{convNm2}
\quad \frac{^{(r)}\wt X_t-a_t}{b_t} \todr N(0,1)\ \text{{or a degenerate distribution}}.
\end{equation*}
\end{theorem}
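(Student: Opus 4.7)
My plan is to establish the equivalence \eqref{convNm1}$\Leftrightarrow$\eqref{convNm+} for asymmetric trimming; the equivalence with the modulus-trimmed statement \eqref{convNm2} then follows by the same scheme, using part~(b) of Theorem~\ref{tighteqv} in place of part~(a). The forward direction is short. If \eqref{convNm1} holds with a normal limit, the characterisation \eqref{DN}, together with $b_t$ regularly varying of index $1/2$ at $0$ and $V$ slowly varying, yields $t\pibar(xb_t)\to 0$ for every fixed $x>0$. A Poisson estimate then gives, for each $i\le r$,
\ben
P\bigl(\Delta X_t^{(i)}/b_t>x\bigr)\;\le\;1-\exp\bigl(-t\pibar^+(xb_t)\bigr)\;\to\;0,
\een
with the analogous bound for the negative ordered jumps, so $Z_t:=\sum_{i=1}^{r}\Delta X_t^{(i)}/b_t-\sum_{j=1}^{s}\Delta X_t^{(j),-}/b_t\topr 0$ and ${}^{(r,s)}S_t=S_t-Z_t$ inherits the limit of $S_t$. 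The degenerate case is handled analogously, $t\pibar(xb_t)\to 0$ now following from the L\'evy--Khintchine convergence of the characteristic function of $S_t$ to $e^{\rmi\theta c}$ for the limit constant $c$.

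For the converse I pursue the compactness-plus-uniqueness plan signalled in the introduction. Assuming \eqref{convNm+}, Theorem~\ref{tighteqv}(a) supplies tightness of $(S_t)$ and of each $(\Delta X_t^{(i)}/b_t)$, $(\Delta X_t^{(j),-}/b_t)$ at $0$. Given any sequence $t_n\dto 0$ I pass to a subsequence on which the joint vector $(S_{t_n},\Delta X_{t_n}^{(1)}/b_{t_n},\ldots,\Delta X_{t_n}^{(r)}/b_{t_n},\Delta X_{t_n}^{(1),-}/b_{t_n},\ldots,\Delta X_{t_n}^{(s),-}/b_{t_n})$ converges in distribution to $(Y,J_1^+,\ldots,J_r^+,J_1^-,\ldots,J_s^-)$, with $Y$ infinitely divisible as a weak limit of infinitely divisible laws. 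The identity $S_t={}^{(r,s)}S_t+Z_t$ and the hypothesis then force
\ben
Y\;-\;\sum_{i=1}^{r}J_i^+\;+\;\sum_{j=1}^{s}J_j^-\;\eqdr\;N(0,1)\;\text{ or the degenerate limit.}
\een

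The crux of the proof, and its principal obstacle, is to show that each $J_i^+$ and $J_j^-$ is identically $0$; once this holds, $Y$ equals the target limit along every convergent subsequence, and tightness of $(S_t)$ promotes this to the full convergence \eqref{convNm1}. My plan is to exploit the conditional mixture representation for ${}^{(r,s)}X_t$ set up in Section~\ref{sect:trun} (extending \cite{bfm14} to asymmetric trimming): given the thresholds $\Delta X_t^{(r)}=u$ and $\Delta X_t^{(s),-}=v$, ${}^{(r,s)}X_t$ equals in distribution, up to an explicit deterministic shift, the time-$t$ value of the L\'evy process whose L\'evy measure is $\Pi$ restricted to $(-v,u)$. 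Combined with the tightness of the random thresholds $\Delta X_t^{(r)}/b_t$ and $\Delta X_t^{(s),-}/b_t$, this representation should let me translate ${}^{(r,s)}S_t\todr N(0,1)$ into the deterministic analytic statement that $V$ is slowly varying at $0$ on the scale $b_t$, equivalently $t\pibar(xb_t)\to 0$ for every $x>0$; the Poisson bound used in the forward direction then forces $J_i^+=J_j^-=0$. This translation is the hardest technical step, because the truncation level in the representation is itself random, so one must unpack a mixture of truncated L\'evy-process laws rather than a single parametric family. The overall strategy borrows from Mori~\cite{mori84}, where in the random-walk setting a discrete order-statistic decomposition plays the role of the present continuous mixture representation, and is strictly simpler than the non-normal stable case treated in Fan~\cite{fan2014st}.
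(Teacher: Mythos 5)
Your forward direction and your compactness set-up for the converse match the paper's: tightness of $(S_t)$ via Theorem \ref{tighteqv}, subsequential infinitely divisible limits, and reduction to showing that the normed ordered jumps vanish. But the step you yourself flag as the crux --- deducing $t\pibar(xb_t)\to 0$ (equivalently $J_i^{\pm}=0$, equivalently that every subsequential limit $Z$ of $S_t$ has L\'evy measure $\Pi_z\equiv 0$) from ${}^{(r,s)}S_t\todr N(0,1)$ --- is not actually argued. Saying that the mixture representation ``should let me translate'' the hypothesis into slow variation of $V$ restates the goal rather than supplying a mechanism, and invoking ``the Poisson bound used in the forward direction'' is circular, since that bound takes $t\pibar(xb_t)\to 0$ as input. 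This is exactly where the paper has to do real work, and the idea it uses is absent from your proposal: a tail-decay comparison. If $\Pi_z\not\equiv 0$, then by Lemma \ref{cut_tie} the truncated-and-tied processes $Z_{t_k}^{u,v}$ converge along a further subsequence to infinitely divisible laws $Y^{u,v}$ whose L\'evy measures are non-trivial with compact support; Theorem 26.1 of Sato \cite{sato99} then gives $e^{\delta x\log x}P(|Y^{u,v}|>x)\to\infty$, so these limits have tails no lighter than $e^{-\delta x\log x}$. Feeding this into the mixture identity \eqref{rand_dis3} via Fatou's lemma (with Egorov's theorem to get uniformity in $(u,v)$) bounds $\lim_k P({}^{(r,s)}S_{t_k}>x)$ from below by a quantity decaying strictly more slowly than the Gaussian tail $O(e^{-x^2/2})$, a contradiction. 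Without this (or an equivalent) quantitative argument your proof does not close.

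Two smaller points. First, your conditional representation of ${}^{(r,s)}X_t$ given the thresholds omits the Poisson tie components $G_t^{\pm,\cdot}$ appearing in \eqref{rand_dis3}; handling these (the Helly selection step in Lemma \ref{cut_tie}) is needed to make the mixture argument rigorous when $\Pi$ has atoms. Second, for the degenerate limit the paper argues more directly, using the key inequality of Proposition \ref{keyIn} to force $\Delta X_t^{(r+1),\pm}/b_t\topr 0$ and hence $t\pibar^{\pm}(xb_t)\to 0$; your joint-subsequence framework would again need the unproved crux at that point.
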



\noindent\textbf{Outline of the Proof}

To show tightness in Theorem \ref{tighteqv}, we make use of a key inequality (Prop. \ref{keyIn}) in Section \ref{sect:ineq} that gives an upper bound to the distribution of the trimmed process. 
Before that, in Section \ref{sect:trun}, we investigate the limit of a truncated L\'evy process as $t \dto 0$, allowing a Poisson number of possible tied values in the jumps.  In Section \ref{sect:prf},  by an important estimate on the tail probability of a L\'evy process in Sato (\cite{sato99}), we prove Theorem \ref{convNc} by showing that each convergent subsequence has the same normal or degenerate limit at $0$. Some auxiliary  results concerning the quadratic variation process of $X$ and the domain of partial attraction of the normal distribution are in Section \ref{sect:aux}.

\section{The Truncated Process}\label{sect:trun}
Here we outline the notation and representation needed from \cite{bfm14} and extend them to the asymmetrically trimmed case.
By the L\'evy-It\^o decomposition (Theorem 19.2, p.120 in \cite{sato99}), we can write 
\ben\label{levyito}
X_t  = \gamma t + \sigma B_t + X_t^{J},
\een
where  $(B_t)$ is a standard Brownian motion and the compensated jump process is
\[
X_t^{ J}  = a.s. \lim_{\veps \dto 0} \left(\sum_{0<s\le t} \Delta X_s {\bf 1}_{\{|\Delta X_s | > \veps \}} - t\int_{\veps < |x| \le 1} x \, \Pi(\rmd x) \right).
\]
Define the right-continuous inverse of a nonincreasing monotone function $f:(0,\infty)\mapsto [0,\infty)$ as
\ben\label{linv}
f^\leftarrow(x)=\inf\{ y>0:   f(y) \le x   \},\ x>0.
\een

We introduce three families of processes, indexed by $v>0$, truncating jumps from sample paths of $X_t^J$.
Let $v,t>0$. When $\pibar(0+)=\infty$, set
\begin{equation*}\label{2.1b}
 X_t^{+,v}:=X_t^{J}-\sum_{0<s\le t} \Delta X_s\;\bfeins_{\{\Delta X_s\ge \pibarpinv(v)\}}, \quad
 X_t^{-,v}:=X_t^{J}-\sum_{0<s\le t} \Delta X_s \bfeins_{\{\Delta X_s\le -\pibarminv(v)\}}, 
\end{equation*}
and for the modulus case,
\begin{equation}\label{2.2b}
\wt X_t^v:=X_t-\sum_{0<s\le t} \Delta X_s\;\bfeins_{\{|\Delta X_s|\ge \pibarinv(v)\}}.
\end{equation}
Under the assumption $\pibar(0+)=\infty$,  $(X_t^{\pm,v})_{t\ge 0}$ and $(\wt X_t^v)_{t\ge 0}$ are well defined L\'evy processes with canonical triplets, respectively,
\begin{equation*}\label{tripasy}
\left(   \mp {\bf 1}_{\{\pibar^{\pm, \leftarrow}(v)\le 1\}}\int_{\pibar^{\pm, \leftarrow}(v)\le x \le 1}x \Pi^\pm(\rmd x),\  0, \,  \Pi^\pm(\rmd x){\bf 1}_ {0<x<\pibar^{\pm, \leftarrow}(v)} \right),
\end{equation*}
and
\begin{equation}\label{trip2}
\left(\gamma-{\bf 1}_{\{\pibarinv(v)\le 1\}}\int_{\pibarinv(v)\le |x| \le 1}x \Pi(\rmd x),\,  \sigma^2,\,  \Pi(\rmd x){\bf 1}_{\{|x|<\pibarinv(v)\}}\right).
\end{equation}

Theorem 2.1 of \cite{bfm14} uses a pathwise construction method to derive representations for the distributions of $ {}^{(r)}X_t$ and ${}^{(r)}\wt X_t$ jointly with their corresponding largest jumps, $\Delta X_t^{(r)}$ and $\wt {\Delta X}_t^{(r)}$. We extend these expressions to the asymmetrically trimmed process ${}^{(r,s)}X_t$ joint with both positive and negative ordered jumps $\Delta X_t^{(r)}$ and $\Delta X_t^{(s),-}$. Note $X_t^J = X_t^+ - X_t^-$ where $X_t^\pm$ are the compensated sums of positive and negative jumps respectively. We can trim these to get  $^{(r,s)}X_t   =\gamma t + \sigma Z_t+ {}^{(r)}X_t^+ - {}^{(s)}X_t^-$, where ${}^{(r)}X_t^+$ and $ {}^{(s)}X_t^-$ are defined analogously as in \eqref{1s}. These processes are non-negative and independent of each other. Therefore the positive and negative jump processes can be treated independently.

For each $r, s \in \N$, let $\Gamma_r$ and $\wt \Gamma_s$ be independent standard Gamma random variables with parameters $r$ and $s$, independent of $(X_t)_{t\ge 0}$. Let $(Y_t)_{t\ge 0}$, $(Y_t^\pm)_{t\ge 0}$ be independent Poisson processes with unit mean, independent from $X$, $\Gamma$, $\wt \Gamma$. On the assumption that $\pibar^+(0+)= \pibar^-(0+) = \infty$, by Theorem 2.1 in \cite{bfm14}, for each $t > 0$,
\begin{align}\label{rand_dis3}
&\left({}^{(r,s)} X_t, \, {\Delta X}_t^{(r)}, \, \Delta X_t^{(s),-}\right) \nonumber \\
&\eqdr \left(  X_t^{u,v}+ G_t^{+,v}- G_t^{-,u},\,  \pibarpinv\left(v\right), \pibarminv\left(u\right)  \right) \bigg|_{v=\Gamma_r/t, u = \wt\Gamma_s/t},
\end{align}
where for $w > 0$,
\[
G^{\pm, w}_t = \pibar^{\pm,\leftarrow}(w) Y_{t\rho_\pm(w)}  \quad \text{and} \quad 
\rho_\pm(w) = \pibar^\pm(\pibar^{\pm, \leftarrow}(w)-)-w 
\]
and for each $u>0$, $v> 0$,
\ben\label{2.7a}
X^{u,v}_t := \gamma t + \sigma Z_t +  { X}_t^{ +, v}-{ X}_t^{-, u}
\een is infinitely divisible with characteristic triplet
\[ \left(\gamma_{u,v},\,  \sigma^2,\,  \Pi(\rmd x){\bf 1}_{\{-\pibarminv(u)< x <\pibarpinv(v)\}}\right).
\] Here
\ben\label{gmauv}
\gamma_{u,v} =\gamma-{\bf 1}_{\{\pibarpinv(v)\le 1\}}\int_{\pibarpinv(v)\le x \le 1}x \Pi(\rmd x) + {\bf 1}_{\{\pibarminv(u)\le 1\}}\int_{\pibarminv(u)\le x \le 1}x \Pi^-(\rmd x).
\een
The processes $G_t^{\pm, w}$ and $\wt G_t^v$ (in \eqref{2rrep_1}) are Poisson processes resulting from possible tied values in the ordered jumps. For completeness, we quote next the representation of the modulus trimmed process from \cite{bfm14} before proceeding to the proofs.

For each $v>0$, recall the modulus truncated process
$(\wt X^{v}_t)_{t\ge 0}$  in  \eqref{2.2b} with canonical triplet 
\begin{equation}\label{trip1}
\left(\wt \gamma_v,\  \sigma^2,\  \Pi(\rmd x){\bf 1}_{\{|x|<\pibarinv(v)\}}\right),
\end{equation} where $\wt \gamma_v = \gamma-{\bf 1}_{\{\pibarinv(v)\le 1\}}\int_{\pibarinv(v)\le |x| \le 1}x \Pi(\rmd x)$ as defined in \eqref{trip2}.
Then, for each $t>0$ and $r \in \N$,
\begin{equation}\label{2rrep_1}
\left(^{(r)}\wt X_t,\,|\wt{\Delta X}_t^{(r)}|\right)
\eqdr
\left(\wt X_t^{v}+\wt G_t^v, \, \pibarinv\left(v\right)\right) \bigg|_{v=\Gamma_r/t},
\end{equation} where $\wt G_t^v = \pibarinv(v)(Y^+_{t\kappa_+(v)} - Y^-_{t\kappa_-(v)})$ and
\be\label{kappm}\kappa_\pm(v) = (\pibar(\pibarinv(v)-)-v)\frac{\Pi\{\pm \pibarinv(v)\}}{\Pi^{|\cdot|}\{\pibarinv(v)\}} {\bf 1}_{\Pi^{|\cdot|}\{\pibarinv(v)\} \neq 0}.
\ee

From the above analysis, we can write down the characteristic functions of the trimmed processes.
For each $\theta \in \R$ and $v > 0$, define
\begin{align}\label{char_prp}
 \wt \Phi (\theta, v)
: = \rmi \theta \wt \gamma_v - \frac 1 2 \sigma^2\theta^2  
   &+ \int_{|x|<\pibarinv(v)} \left(  e^{\rmi \theta x } -1 - \rmi \theta x {\bf 1}_{|x|\le 1}  \right) \Pi(\rmd x) \nonumber \\
   &+ \kappa_+(v) (e^{\rmi \theta \pibarinv(v)} - 1) + \kappa_-(v) (e^{-\rmi \theta \pibarinv(v)} - 1).
\end{align}
This is the characteristic exponent of $\wt X_1^v + \wt G_1^v$.
Similarly for $r,s$-asymmetrical trimming, define, for each $u, v >0$ and $\theta \in \R$,
\begin{align*} 
 \Phi(\theta, u, v)
 := \rmi \theta \gamma_{u,v} - \frac 1 2 \sigma^2\theta^2  
   &+ \int_{(-\pibarminv(u),\pibarpinv(v))_*} \left(  e^{\rmi \theta x } -1 - \rmi \theta x {\bf 1}_{|x|\le 1}  \right) \Pi(\rmd x)\nonumber \\
   &+ \rho_+(v) (e^{\rmi \theta \pibarpinv(v)} - 1) + \rho_-(u) (e^{-\rmi \theta \pibarminv(u)} - 1),
\end{align*} which is the characteristic exponent of $X_1^{u,v} + G_1^{+,v} - G_1^{-,u}.$
 
Then the characteristic functions of the trimmed processes are
\begin{align}\label{char_r}
& E \left(e^{\rmi \theta  {}^{(r)} \wt X_t}\right)
=   \int_{(0,\infty)} \exp(t\wt \Phi (\theta, v)) P(\Gamma_r \in t \rmd v) \\
\quad \text{and} \quad \nonumber 
& E \left(e^{\rmi \theta  {}^{(r,s)} X_t}\right)
=  \int_0^\infty \int_{0}^\infty \exp(t\Phi (\theta,u, v)) P(\wt\Gamma_s \in t \rmd u) P(\Gamma_r \in t \rmd v) .
\end{align}

\subsection{Normed and Centered Truncation}

Suppose for some non-stochastic functions $a_t \in \R$ and $b_t > 0$ and a sequence $t_n \dto 0$, a L\'evy process $X_t$ has a limit in distribution, i.e.
\begin{equation}\label{inflim}
 \frac{X_{t_n}-a_{t_n}}{b_{t_n}} \todr Y, \quad \text{as} \quad t \dto 0,
\end{equation} for some a.s. finite nondegenerate random variable $Y$. By Lemma 4.1 in Maller and Mason \cite{MM08}, $Y$ has to be infinitely divisible, say with triplet $(\beta, \tau^2, \Lambda)$.
 In this section, we investigate the convergence of the truncated processes $X_t^{u/t,v/t}$  and $\wt X_t^{v/t}$ with the same centering and norming, for appropriate $u, v > 0$ through the sequence $t_n$. However, in order to relate to the trimmed process, we need to consider not just the truncated processes but also the Poisson number of ties at each truncation level.  With this restriction, we only get convergence through a subsequence in general. Nonetheless, this suffices for our purposes.

For each $t>0$, $u, v>0$,  and $a_t \in \R$, $b_t > 0$ non-stochastic functions, abbreviate the normed, centred, truncated processes including the Poisson number of ties by 
\be\label{def2}
 Z_t^{u,v} : = \frac{X_t^{u/t, v/t} + G_t^{+, v/t}- G_t^{-,u/t} - a_t}{b_t} 
 \quad \text{and} \quad  \wt Z_t^{v} : = \frac{\wt X_t^{v/t} + \wt G_t^{v/t} - a_t}{b_t}.
\ee 
If $(X_{t_n}-a_{t_n})/b_{t_n}$ converges as in \eqref{inflim}, we show that $Z_t^{u,v}$ and $\wt Z_t^v$ also have infinitely divisible limits  at least through a subsequence of $t_n$. Let $\lambar$ and $\lambar^\pm$ denote the tails of the L\'evy measure $\Lambda$ of $Y$.

\begin{lemma}\label{cut_tie}
Suppose $\pibar(0+)=\infty$ and for some non-stochastic functions $a_t$ and $b_t >0$, and sequence $t_n \dto 0$,
\ben\label{inf.1}
\frac{X_{t_n} - a_{t_n}}{b_{t_n}} \todr Y, \quad \text{as} \quad n \to \infty
\een for some a.s. finite infinitely divisible distribution $Y$ with characteristic triplet $(\beta, \tau^2, \Lambda)$. Suppose further that $\Lambda \neq 0$ so there exists $l > 0$ such that $m:= \lambar(l) >0$. Then the following hold.

{\rm (i)} For each continuity point $v$ of $\laminv$ such that $v \in (0,m)$,  $(\wt X_{t_n}^{v/{t_n}} - a_{t_n})/b_{t_n}$ converges in distribution to an infinitely divisible random variable
$\wt Y^v$ as $n \to \infty$, where $\wt Y^v$ is the value at time 1 of a L\'evy process  with canonical triplet  $(\wt\beta_v, \wt\tau^2_v, \wt\Lambda_v)$ given by
\ben\label{cut_trp.2}
\wt \beta_v=  \beta-{\bf 1}_{\{\laminv(v)\le 1\}}\int_{\laminv(v)\le |y|\le 1} y \Lambda(\rmd y),\ \wt \tau_v^2 = \tau^2, \
\wt\Lambda_v(\rmd x)= \Lambda(\rmd x){\bf 1}_{\{|x|<\laminv(v)\}}.
\een

Similarly, for each continuity point $u >0$ of $\lamminv(\cdot)$ and each continuity point $v >0 $ of $\lampinv(\cdot)$, such that $u, v \in (0,m)$, we have
\[ \frac{X_{t_n}^{u/{t_n}, v/{t_n}} - a_{t_n}}{b_{t_n}} \todr Y^{u,v}  \quad \text{as }\quad n \to \infty
\]
where $Y^{u,v}$ has canonical triplet $(\beta_{u,v}, \tau^2_{u,v}, \Lambda_{u,v})$ given by
\begin{align*}\label{cut_trp3}
&\beta_{u,v}=  \beta-{\bf 1}_{\{\lampinv(v)\le 1\}}\int_{\lampinv(v)\le y\le 1} y \Lambda(\rmd y) +{\bf 1}_{\{\lamminv(u)\le 1\}}\int_{\lamminv(u)\le y\le 1} y \Lambda^-(\rmd y)  ,\ \nonumber \\
 &  \tau_{u,v}^2 = \tau^2, \quad \text{and}  \quad
\Lambda_{u,v}(\rmd x)= \Lambda(\rmd x){\bf 1}_{\{ -\lamminv(u)<x<\lampinv(v)\}_*}.
\end{align*}

{\rm (ii)} 
There exists a subsequence $\{t_{n_k} \dto 0\}$ and some infinitely divisible random variables $Y^{u,v}$ and $\wt Y^v$ which may depend on the choice of subsequence such that
\ben\label{inf.2}
Z_{t_{n_k}}^{u,v}\todr Y^{u,v} \quad \text{and}\quad \wt Z_{t_{n_k}}^v \todr \wt Y^v \quad \text{as } k \to \infty,
\een 
for each $u, v \in (0,m)$ that are continuity points of $\lamminv$ and $\lampinv$ respectively, 

In both {\rm (i)} and {\rm (ii)}, the supports of the L\'evy measures of $Y^{u,v}$ and $\wt Y^v$ include the sets $ (-\lamminv(u), \lampinv(v))_*$ and $(-\laminv(v), \laminv(v))_*$ respectively.

\end{lemma}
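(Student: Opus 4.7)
The plan is to translate the weak convergence \eqref{inf.1} of infinitely divisible laws into convergence of scaled L\'evy triplets, then apply this to the triplets of the truncated processes given in \eqref{trip2} and \eqref{gmauv}. By the classical continuity theorem for infinitely divisible distributions (e.g.\ Sato \cite{sato99}, Theorem 8.7), \eqref{inf.1} is equivalent to vague convergence $\Pi_n^*\tovag\Lambda$ on $\R_*$, where $\Pi_n^*(A):=t_n\Pi(b_{t_n}A)$ is the scaled L\'evy measure, together with the usual convergences of the variance and drift parameters. In particular, for every continuity point $x>0$ of $\lambar$, $t_n\pibar(b_{t_n}x)\to\lambar(x)$; inverting gives $b_{t_n}^{-1}\pibarinv(v/t_n)\to\laminv(v)$ at every continuity point $v\in(0,m)$ of $\laminv$, and analogously for $\lampinv,\lamminv$. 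This is the key reduction.

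For part (i), I would compute the triplet of $(\wt X_{t_n}^{v/t_n}-a_{t_n})/b_{t_n}$ from \eqref{trip2} using the above scaling: the Gaussian component is unchanged, the L\'evy measure becomes $\Pi_n^*$ restricted to $\{|y|<b_{t_n}^{-1}\pibarinv(v/t_n)\}$, and the drift adjusts accordingly. Since $\laminv(v)$ is a continuity point of $\lambar$, vague convergence together with convergence of the cutoff level yields convergence of the restricted L\'evy measures to $\Lambda\bfeins_{|x|<\laminv(v)}$ and convergence of the drift correction integral to that in \eqref{cut_trp.2}, identifying the triplet $(\wt\beta_v,\wt\tau_v^2,\wt\Lambda_v)$. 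The asymmetrical case is handled analogously using \eqref{gmauv}, relying on continuity of $\lampinv$ at $v$ and of $\lamminv$ at $u$ to control both truncation boundaries simultaneously.

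For part (ii) I need to incorporate the Poisson tie terms, e.g.\ $\wt G_t^{v/t}=\pibarinv(v/t)(Y^+_{t\kappa_+(v/t)}-Y^-_{t\kappa_-(v/t)})$. By \eqref{kappm} each intensity $t\kappa_\pm(v/t)$ is bounded by $v$, so I would extract a subsequence $t_{n_k}\dto 0$ along which $t_{n_k}\kappa_\pm(v/t_{n_k})\to c_\pm(v)\in[0,v]$; then $Y^\pm_{t_{n_k}\kappa_\pm(v/t_{n_k})}\todr \text{Poisson}(c_\pm(v))$. Combined with $b_{t_{n_k}}^{-1}\pibarinv(v/t_{n_k})\to\laminv(v)$ and independence of the tie term from the truncated process, $\wt Z_{t_{n_k}}^v$ converges to the sum of $\wt Y^v$ from part (i) and an independent compound-Poisson contribution $\laminv(v)(N_+-N_-)$, which is again infinitely divisible. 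A diagonal extraction over a countable dense subset of continuity points of $\laminv,\lampinv,\lamminv$ then produces a single subsequence along which \eqref{inf.2} holds for every prescribed $u,v$. The support claims for the limit L\'evy measures follow directly from the explicit triplets, since truncation confines jumps to the stated intervals.

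The main obstacle is managing the three distinct convergences simultaneously: vague L\'evy-measure convergence (which requires continuity points), drift integrals involving the $\bfeins_{|y|\le 1}$ cutoff (which interacts with the variable truncation levels $b_{t_n}^{-1}\pibarinv(v/t_n)$ that can cross $1$), and the Poisson tie intensities (which need not converge along the original sequence $t_n$, forcing subsequence extraction). Carefully aligning the correction integrals coming from the Sato-triplet convergence of the untrimmed process with those appearing in the truncated triplet \eqref{trip2} is the delicate bookkeeping step.
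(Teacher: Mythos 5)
Your part (i) is essentially the paper's argument: translate \eqref{inf.1} into the Kallenberg/Sato triplet convergences \eqref{cut_KL}, deduce $\pibarinv(v/t_n)/b_{t_n}\to\laminv(v)$ at continuity points, and pass to the limit in the triplet of the truncated process (the paper does this by an explicit characteristic-function computation with an $\veps b_t$ splitting for the Gaussian part, but that is the same bookkeeping you describe). Part (ii), however, has two genuine gaps. First, your claimed bound $t\kappa_\pm(v/t)\le v$ is false: by \eqref{kappm}, $t\kappa_\pm(v/t)\le t\pibar(\pibarinv(v/t)-)-v$, and the left limit $\pibar(\pibarinv(v/t)-)$ exceeds $v/t$ by the (scaled) mass of the atom at $\pibarinv(v/t)$, which need not be controlled by $v$. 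The correct boundedness argument, which you still need for any compactness extraction, is the paper's: since $\pibarinv(v/t)/b_t\to\laminv(v)\ge l>0$, one has $\pibarinv(v/t)\ge c_v b_t$ with $c_v=\laminv(v)-\delta>0$ for small $t$, whence $t\pibar(\pibarinv(v/t)-)\le t\pibar(c_v b_t)\to\lambar(c_v)<\infty$.

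Second, a diagonal extraction over a countable dense set of $(u,v)$ does not deliver the stated conclusion, which requires a \emph{single} subsequence along which $\wt Z^v_{t_{n_k}}$ converges for \emph{every} continuity point $v\in(0,m)$ of $\laminv$ (an uncountable set). The map $v\mapsto t\kappa_\pm(v/t)$ is not monotone, so convergence on a dense countable set does not propagate to all continuity points. The paper's device is to write $t\kappa_\pm(v/t)=\lambda_t^\pm\bigl(t\pibar(\pibarinv(v/t)-)\bigr)-\lambda_t^\pm(v)$ with $\lambda_t^\pm(w)=\int_0^w g^\pm(\pibarinv(z/t))\,\rmd z$, where $g^\pm=\rmd\Pi^\pm/\rmd\Pi^{|\cdot|}$; both terms are nondecreasing in $v$, so Helly's selection theorem yields one subsequence along which both converge for all $v$, and hence the tie intensities converge for all relevant $v$ simultaneously. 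Without some such monotonicity structure your subsequence only works at the countably many chosen points, which is not what the lemma (nor its use in the proof of Theorem \ref{convNc}, where one integrates over $u,v\in(0,m)$ against the Gamma laws) requires.
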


\begin{proof}
Assume  $\pibar(0+)=\infty$. We prove the case with modulus truncation and to ease the notation we will write $t$ for $t_n$.
We thus assume $ (X_t-a_t)/b_t$ converges as $t\dto 0$ but make no assumption regarding the limit distribution other than that it is a.s. finite. By Kallenberg's conditions (Theorem 15.14, Kallenberg \cite{kal02}),
the following limits hold for each continuity point $x > 0$ of $\lambar^\pm(\cdot)$:
\begin{align}\label{cut_KL}
\lim_{t\dto 0} t\pibar^\pm(x b_t)=\lambar^\pm(x), \
\lim_{t \dto 0}\frac{t V(x b_t)}{b_t^2} = \tau^2 + \int_{|y|\le x} y^2 \Lambda(\rmd x), \
\lim_{t \dto 0} \frac{t\nu(b_t)-a_t}{b_t} = \beta.
\end{align}
By properties of inverse monotone functions (Proposition 0.1 in Resnick p.5  \cite{res87}), the first relation in \eqref{cut_KL} implies that 
$\pibarinv(v/t)/b_t \to \laminv(v)$ for each continuity point $v>0$ of $\laminv$.
By  \eqref{char_prp} and \eqref{char_r},  we have 
\begin{align}\label{inf.3}
E\left(\exp(\rmi \theta \wt Z_t^v)\right) 
	&= \exp\Bigg\{ 
	  \rmi \theta \left(\frac{t\wt \gamma_{v/t} - a_t}{b_t}-t\int_{b_t\le |x| \le 1, |x|<\pibarinv(v/t)} \frac{x}{b_t} \, \Pi(\rmd x)\right) \nonumber \\
	 & -\frac 1 2 \frac{t\sigma^2\theta^2}{b_t^2}+ t\int_{|x|<\pibarinv(v/t)} \left(e^{\rmi \theta x/b_t } -1 -\rmi\theta x/b_t {\bf 1}_{|x|\le b_t}\right) \, \Pi( \rmd x)\nonumber \\
	& + t\kappa_+(v/t)\left(e^{\rmi\theta \pibarinv(v/t)/b_t}-1\right) +t\kappa_-(v/t)\left(e^{-\rmi\theta \pibarinv(v/t)/b_t}-1\right) 	
		\Bigg\}.  
\end{align}
 By \eqref{trip1}, the resulting centering, i.e. the first line on the RHS of \eqref{inf.3}, equals 
\begin{align}\label{inf.4}
   & \left(\frac{t\gamma-a_t}{b_t} - {\bf 1}_{\{\pibarinv(v/t)\le 1\}} t \int_{\pibarinv(v/t)\le |x|\le 1 } \frac{x}{b_t} \Pi(\rmd x)
- t \int_{ b_t < |x|\le 1, |x|<\pibarinv(v/t)} \frac{x}{b_t} \, \Pi(\rmd x )  \right)\nonumber \\
& =   \left( \frac{t\gamma-a_t}{b_t} -{\bf 1}_{\{\pibarinv(v/t)\le b_t \}} t\int_{\pibarinv(v/t)\le |x|\le b_t} \frac{x}{b_t} \, \Pi( \rmd x ) - t\int_{b_t < |x| \le 1 } \frac{x}{b_t} \Pi(\rmd x)  \right)
 \nonumber \\
 & = \frac{t\nu(b_t) - a_t}{b_t} - {\bf 1}_{\{\pibarinv(v/t)/b_t\le 1\}} \int_{\pibarinv(v/t)/b_t\le |x| \le 1} x \, t\Pi(b_t \rmd x ) \nonumber \\
 &\stackrel{t \dto 0}{ \longrightarrow} 
  \beta-{\bf 1}_{\{\laminv(v)\le 1\}} \int_{\laminv(v)\le |x|\le 1} x \, \Lambda(\rmd x):= \wt \beta_v.
\end{align} In the last line of \eqref{inf.4}, note that $\laminv(v) > 0$ for $v \in (0,m)$ which is a continuity point of $\lambar$, hence making use of \eqref{cut_KL} and dominated convergence, we arrive at the limit $\wt \beta_v$.

By assuming $\Lambda \neq 0 $, for each $v \in (0, m)$ a continuity point of $\lambar$, where $m = \lambar(l) > 0$ for some $l >0$, we have $\pibarinv(v/t)/b_t \to \laminv(v) \ge \laminv(m)\ge l >0$.  So $\veps b_t < \pibarinv(v/t)$ for all $0<\veps < \min(l,1) $, $v \in (0,m)$ and all sufficiently small $t$.
Hence we can break up the second line in \eqref{inf.3} into two parts. 
First consider the integral on $ \{|x| \le \veps b_t \}$:
\begin{align}\label{inf.4a}
& -\frac{t\sigma^2 \theta^2}{2b_t^2} + t \int_{|x|\le \veps b_t} \left( e^{\rmi \theta x /b_t} -1 -  \rmi \theta x /b_t  \right) \Pi(\rmd x) \nonumber \\
& =-\frac{t\sigma^2 \theta^2}{2b_t^2} + 
t \int_{|x|\le \veps b_t} \left( \frac{(i\theta x)^2 }{2 b_t^2}  + O\left(\frac{|x^3|}{b_t^3}\right) \right) \Pi(\rmd x) \nonumber \\ 
& = -\frac {t\theta^2}{2b_t^2} \left(\sigma^2 + \int_{|x| \le \veps b_t} x^2 \Pi(\rmd x)\right) + t \int_{|x|\le \veps b_t} O\left(\frac{|x|^3}{b_t^3}\right)\Pi(\rmd x) \nonumber\\
& = -\frac{t\theta^2 V(\veps b_t)}{2 b^2_t} + O\left( \frac{\veps t V(\veps b_t)}{b^2_t} \right).
\end{align} By \eqref{cut_KL}, 
\[ \lim_{\veps \dto 0} \lim_{t \dto 0} \frac{tV(\veps b_t)}{b_t^2} = \tau^2 .
\]
The second term in \eqref{inf.4a} is $O(\veps)$ as $t \dto 0$ hence arbitrarily small. So the expression in \eqref{inf.4a} tends to $-\theta^2 \tau^2/2$ as $t \dto 0$ then $\veps \dto 0$.

Next consider the component of the integral in the second line of \eqref{inf.3} on $\{\veps b_t <|x| < \pibarinv(v/t)\}$:  
\begin{align*}\label{inf.4c}
& t \int_{\veps b_t <|x| < \pibarinv(v/t)} \left( e^{\rmi \theta x /b_t} -1 -  \rmi \theta x /b_t {\bf 1}_{|x| \le b_t}  \right) \Pi(\rmd x) \nonumber \\
& = t \int_{\veps < |x| < \pibarinv(v/t)/b_t} \left( e^{\rmi \theta x} -1 - \rmi \theta x {\bf 1}_{|x| \le 1} \right) \Pi(  b_t \rmd x)  \nonumber \\
&\to \int_{|x| < \laminv(v)} \left( e^{\rmi \theta x} -1 - \rmi \theta x {\bf 1}_{|x| \le 1} \right)  \Lambda(\rmd x) \quad \text{as} \quad t \dto 0 \text{ and then } \veps \to 0.
\end{align*}
Therefore the overall limit as $t \dto 0$ for the second line in \eqref{inf.3} is 
\be\label{inf.4bc} 
 -\frac 1 2 \theta^2 \tau^2 + \int_{|x|<\laminv(v)} \left(e^{\rmi \theta x} -1 -\rmi\theta x{\bf 1}_{|x|\le 1}\right) \, \Lambda ( \rmd x).
\ee 
From here we can see that the support of the limit L\'evy measure is $\{ |x| < \laminv(v)\}_*$ without considering the ties component. The ties component, if present, will only enlarge the support by including one or both boundary points. This proves Part {\rm (i)}, for the convergence of $(\wt X_{t_n}^{v/{t_n}}-a_{t_n})/b_{t_n}$.

For Part {\rm (ii)}, the Poisson number of ties are added to $\wt Z_t^v$ in \eqref{def2}. This corresponds to the last line of \eqref{inf.3} in the characteristic function.
As before, we fix $v \in (0,m)$ to be a continuity point of $\laminv$. By \eqref{kappm}, the ties disappear if $\pibarinv(v/t)$ is not an atom of $\Pi^{|\cdot|}$. Let $\{t_n\} \dto 0$ be the given sequence.  If there exists a subsequence $\{t_{n_k}\} \dto 0$ such that $\pibarinv(v/t_{n_k})$ is a continuity point of $\Pi$ for all $\{t_{n_k}\}$ for sufficiently large $k$, then the ties components converge to $0$ as $k \to \infty$, and Part {\rm (ii)} of the Lemma is true for this subsequence.
 
Suppose this is not the case. In this situation we have to choose a further subsequence. Henceforth without loss of generality, we assume additionally that $\Pi^{|\cdot|}\{\pibarinv(v/t_n)\} \neq 0$ for all $n \in \N$. 
Observe from \eqref{def2} that
\begin{equation}\label{BB1}
\wt Z_{t_n}^{v}  = \frac{\wt X_{t_n}^{v/{t_n}}-a_{t_n}}{b_{t_n}} + \frac{\wt G_{t_n}^{v/{t_n}}}{b_{t_n}}.
\end{equation} Since it is shown in Part {\rm (i)} that the first term in \eqref{BB1} converges to an infinitely divisible random variable with characteristic triplet $(\wt \beta_v, \wt \tau_v^2, \wt \Lambda_v)$,  we only need to show that $\wt G_{t_n}^{v/{t_n}}/b_{t_n}$ has a limit through a subsequence. Recall from \eqref{2rrep_1}-\eqref{kappm}, 
\[ \frac{\wt G_{t_n}^{v/{t_n}}}{b_{t_n} } = \frac{\pibarinv(v/t_n)}{b_{t_n}} \left(Y^+_{t_n \kappa_+(v/t_n)} - Y^-_{t_n \kappa_-(v/t_n)}\right)
\] where $Y^\pm $ are Poisson processes with unit mean, independent of $\wt X^{v/t_n}_{t_n}$.
By \eqref{kappm},
\begin{align}\label{inf.5}
t\kappa_\pm(v/t)
= t\left(\pibar(\pibarinv(v/t)-)- \frac{v}{t}\right)\frac{\Pi\{\pm \pibarinv(v/t)\}}{\Pi^{|\cdot|}\{\pibarinv(v/t)\}}
= \int_{v}^{t\pibar(\pibarinv(v/t)-)} g^\pm(\pibarinv(v/t)) \rmd u,
\end{align} where $g^\pm = \rmd \Pi^\pm /\rmd \Pi^{|\cdot|}$ are the Radon-Nikodym derivatives of $\Pi^\pm$ with respect to $\Pi^{|\cdot|}$. Since $\pibarinv(v/t)$ is an atom of $\Pi^{|\cdot|}$, 
\[g^\pm (\pibarinv(v/t)) = \frac{\Pi\{\pm \pibarinv(v/t)\}}{\Pi^{|\cdot|}\{\pibarinv(v/t)\}}.
\]
For each $w > 0$, $t>0$, define
\begin{equation*}\label{lam.def}
\lambda_t^\pm (w) = \int_0^w g^\pm(\pibarinv(u/t))\rmd u.
\end{equation*}
Note that $\pibarinv(z/t) = \pibarinv(v/t)$ for each $z \in (v, t\pibar(\pibarinv(v/t)-))$.
Hence, \eqref{inf.5} equals
\ben\label{inf.6}
t\kappa_\pm(v/t) = \lambda_t^\pm(t\pibar(\pibarinv(v/t)-))- \lambda_t^\pm(v).
\een Observe that $\lambda_t^\pm(t\pibar(\pibarinv(v/t)-))$ and $\lambda_t^\pm(v)$ are nondecreasing in $v$. Therefore by Helly's selection theorem, there exists a subsequence $\{t_{n_k} \dto 0\}$  of $\{t_n\}$ and nondecreasing functions $h^\pm(\cdot)$ and $l^\pm (\cdot)$ such that
\begin{equation}\label{inf.7}
 \lambda_{t_{n_k}}^\pm({t_{n_k}}\pibar(\pibarinv(v/{t_{n_k}})-)) \to h^\pm(v)\qquad \text{and } \qquad \lambda_{t_{n_k}}^\pm(v)\to l^\pm(v) \quad \text{as} \quad k \to \infty.
\end{equation}
Therefore $ 0 <t_{n_k}\kappa_\pm(v/t_{n_k}) \to h^\pm(v)-l^\pm(v) =: \lambda^\pm(v)$. We claim that these quantities are finite for each $v \in (0,m)$. To see this,
note that for each $v \in (0,m)$, $\laminv(v) \ge l>0$.  Hence there exists a $\delta  > 0$ such that $c_v : = \laminv(v) - \delta > 0$. Since $\pibarinv(v/t)/b_t \to \laminv(v)$, thus $\pibarinv(v/t)\ge b_t c_v$, for all sufficiently small $t$. Hence
\begin{equation*}\label{inf.8}
t\pibar(\pibarinv(v/t)-) \le t\pibar(b_t c_v) \to \lambar ( c_v) <\infty.
\end{equation*}
This shows that for each $v\in (0,m)$, $t\kappa_\pm(v/t) <\infty$ for all sufficiently small $t > 0$.
To summarise, by \eqref{inf.4}, \eqref{inf.4bc} and \eqref{inf.7}, $E\left(\exp(\rmi \theta \wt Z_{t_{n_k}}^v)\right)$ tends, as $k\to \infty$, to
\begin{align}\label{inf.9}
&\exp\Bigg\{ 
				\rmi \theta \wt \beta_v  - \frac 1 2 \theta^2 \tau^2 + \int_{|x|<\laminv(v)} \left(e^{\rmi \theta x} - 1 -\rmi \theta x {\bf 1}_{|x|\le 1}\right) \Pi(\rmd x) \nonumber \\
			&\hspace*{1.5in} + \lambda^+(v)\left(e^{\rmi \theta \laminv(v)} -1\right)
			+ \lambda^-(v)\left(e^{-\rmi \theta \laminv(v)} -1 \right)	\Bigg\} : = \wt \psi_v(\theta).
\end{align}
Note that \eqref{inf.9} is the characteristic function of the limit random variable, say $\wt Y^v$, which is a convolution of an infinitely divisible random variable with canonical triplet $(\wt \beta_v, \tau^2, \wt \Lambda_v)$ and two independent Poisson numbers at $\pm\laminv(v)$ respectively.

This completes the proof of the modulus truncation. Asymmetrical truncation can be computed analogously.
\end{proof}

\section{Inequalities}\label{sect:ineq}
In this section, we derive inequalities that relate the tails of trimmed processes with their largest jumps.
First let us write out the marginal distribution of the $(r+1)^{st}$ ordered jump from the representation formulae in \eqref{rand_dis3} and \eqref{2rrep_1}.
\begin{lemma}\label{rJ}
Let $y >0$. Then
\begin{align}\label{rJ0}
P(|\wt{\Delta X}_t^{(r+1)} | > y ) &= \int_{0}^{t\pibar(y)}P(\Gamma_{r+1} \in \rmd v) \nonumber \\
&= \int_{0}^{t\pibar(y)}P(\Gamma_{r} \in \rmd v)-e^{-t\pibar(y)}\frac{(t\pibar(y))^{r}}{r!}.
\end{align}
Similarly,
\begin{align}\label{rJ1}
P(\Delta X_t^{(r+1), \pm}  > y ) 
&= \int_{0}^{t\pibar^\pm(y)}P(\Gamma_{r+1} \in \rmd v) \nonumber \\
&= \int_{0}^{t\pibar^\pm(y)}P(\Gamma_{r} \in \rmd v)-e^{-t\pibar^\pm(y)}\frac{(t\pibar^\pm(y))^{r}}{r!}.
\end{align}
Hence,
\be\label{rJ3}
e^{-t\pibar(y)} \frac{(t\pibar(y))^{r+1}}{(r+1)!} \le P(|\wt{\Delta X}_t^{(r+1)} | > y ) \le \frac{(t\pibar(y))^{r+1}}{(r+1)!},
\ee
and 
\be\label{rJ2}
e^{-t\pibar^\pm(y)} \frac{(t\pibar^\pm(y))^{r+1}}{(r+1)!} \le P({\Delta X}_t^{(r+1),\pm}  > y ) \le \frac{(t\pibar^\pm(y))^{r+1}}{(r+1)!}.
\ee

\end{lemma}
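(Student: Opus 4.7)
The plan is to derive both claims from the representation formulas \eqref{rand_dis3} and \eqref{2rrep_1}. Reading off just the last marginal (with $r+1$ in place of $r$) they give
\[
|\wt{\Delta X}_t^{(r+1)}| \eqdr \pibarinv(\Gamma_{r+1}/t),
\qquad
\Delta X_t^{(r+1),\pm} \eqdr \pibarpminv(\Gamma_{r+1}/t),
\]
where $\Gamma_{r+1}$ has the standard Gamma$(r+1)$ density $v^r e^{-v}/r!$ on $(0,\infty)$ independently of everything else. The translation step is that, because $\pibar$ is right-continuous and non-increasing, the definition \eqref{linv} yields the elementary equivalence $\{\pibarinv(u) > y\} = \{\pibar(y) > u\}$, and likewise for $\pibarpminv$ with $\pibar^{\pm}$. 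Thus
\[
P(|\wt{\Delta X}_t^{(r+1)}| > y) = P(\Gamma_{r+1} < t\pibar(y)) = \int_0^{t\pibar(y)} \frac{v^r e^{-v}}{r!}\, dv,
\]
which is exactly the first equality in \eqref{rJ0}; the argument for \eqref{rJ1} is identical with $\pibar$ replaced by $\pibar^{\pm}$.

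For the second equalities in \eqref{rJ0} and \eqref{rJ1}, I would integrate by parts on $\int_0^x v^r e^{-v}/r!\, dv$, taking $u=v^r/r!$ and $dV=e^{-v}\,dv$, to get
\[
\int_0^x \frac{v^r e^{-v}}{r!}\, dv = \int_0^x \frac{v^{r-1} e^{-v}}{(r-1)!}\, dv - \frac{x^r e^{-x}}{r!}.
\]
Applied with $x = t\pibar(y)$ (resp.\ $t\pibar^{\pm}(y)$) this is the Poisson--Gamma form stated in the lemma. Equivalently, one may cite the classical identity $P(\Gamma_{r+1} \le x) = 1 - \sum_{k=0}^{r} e^{-x} x^k/k!$ and subtract term by term.

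The two-sided bounds \eqref{rJ3} and \eqref{rJ2} then follow by a pointwise comparison of the integrand on $(0,x)$: since $e^{-x} \le e^{-v} \le 1$ for $v \in [0,x]$, integration immediately gives
\[
e^{-x} \frac{x^{r+1}}{(r+1)!} \le \int_0^x \frac{v^r e^{-v}}{r!}\, dv \le \frac{x^{r+1}}{(r+1)!},
\]
and specializing $x$ to $t\pibar(y)$ or $t\pibar^{\pm}(y)$ finishes the proof. I do not expect any real obstacle: the lemma is essentially a transcription of the representations from Section \ref{sect:trun} combined with the explicit Gamma density and two elementary estimates. The only point meriting a line of verification is the inverse identity $\{\pibarinv(u) > y\} = \{\pibar(y) > u\}$, which follows directly from right-continuity and monotonicity of $\pibar$.
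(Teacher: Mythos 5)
Your proposal is correct and follows essentially the same route as the paper: read off the marginal of the $(r+1)$st ordered jump from the representations \eqref{rand_dis3} and \eqref{2rrep_1}, translate via the generalized inverse to $P(\Gamma_{r+1}<t\pibar(y))$, integrate by parts for the second identity, and bound the Gamma integrand by $e^{-x}\le e^{-v}\le 1$ for the two-sided estimates. No gaps.
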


\begin{proof} 
From the representation in \eqref{2rrep_1},
\ben\label{rJ5}
P(|\wt{\Delta X}_t^{(r+1)} | > y ) = P(\pibarinv(\Gamma_{r+1}/t)> y) = P(\Gamma_{r+1} < t\pibar(y)).
\een
This gives the first identity in \eqref{rJ0}. Integrate by parts to get
\ben\label{rJ6}
\int_{0}^{t\pibar(y)} \frac 1 {r!} x^r e^{-x} \rmd x  = \frac 1 {r!}\left(-(t\pibar(y))^re^{-t\pibar(y)}
+ \int_0^{t\pibar(y)} r x^{r-1}e^{-x}\rmd x   \right).
\een
Then we can read off the second identity in \eqref{rJ0}. \eqref{rJ1} can be proved similarly. The inequality in \eqref{rJ3} is straightforward by observing that 
\[ e^{-t\pibar(y)} \int_0^{t\pibar(y)} \frac{x^{r}}{r!}\rmd x \le
\int_0^{t\pibar(y)} e^{-x} \frac{x^{r}}{r!}\rmd x \le \int_0^{t\pibar(y)} \frac{x^{r}}{r!}\rmd x.
\]
\eqref{rJ2} can be proved similarly.
\end{proof}

\begin{remark}\label{cdfdis1}
The tail of the cumulative distribution function (cdf) of the modulus ordered jumps satisfies
\ben\label{cdfdis2}
P(|\wt{\Delta X}_t^{(r)}| > y ) = P(\Gamma_r < t\pibar(y))\quad 
\text{and} \quad P(|\wt{\Delta X}_t^{(r)}| \ge  y ) = P(\Gamma_r < t\pibar(y-)).
\een Therefore the discontinuity points of the distribution of ordered jumps coincide with the atoms of its L\'evy measure $\Pi^{|\cdot|}$, which are at most countable. 
\end{remark}

We state our main inequality relating the cdf of trimmed processes with that of the normed ordered jumps. A version of the following inequality appeared in Buchmann et al. \cite{bfm14} in which only the one-sided maximal trimmed process is considered.

\begin{proposition}\label{keyIn}
Assume $\pibar(0+)=\infty$. For each $t, x >0$, $r,s \in \N$,
let $a_t \in \R$ be any non-stochastic function. We have
\be\label{key0.+}
4P(|{}^{(r,s)} X_t -a_t| > x) \ge \max\left(P({\Delta X}_t^{(r+1)}> 4x),  P( {\Delta X}_t^{(s+1),-} > 4x)\right).
\ee
By letting $r = 0$ or $s = 0$, we get 
\ben\label{key0.one}
4P(|{}^{(r)} X_t -a_t| > x) \ge P({\Delta X}_t^{(r+1)}> 4x) 
\een and
\ben\label{key0.one:1}
4P(|{}^{(s,-)}X_t -a_t | > x) \ge P( {\Delta X}_t^{(s+1),-} >4x).
\een
Similarly the modulus trimmed process satisfies
\be\label{keyin0}
4P(|{}^{(r)}\wt X_t-a_t| > x) \ge P\big(\big|\wt {\Delta X}_t^{(r+1)}\big|> 4x\big).
\ee
\end{proposition}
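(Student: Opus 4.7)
I will focus on the asymmetric inequality \eqref{key0.+}; the second half of the $\max$ follows by symmetry, the one-sided bounds \eqref{key0.one} and \eqref{key0.one:1} are the specialisations to $s=0$ and $r=0$ respectively, and the modulus inequality \eqref{keyin0} is proved by the same scheme with representation \eqref{2rrep_1} in place of \eqref{rand_dis3}. The proof rests on the joint representation combined with a conditional median/symmetrisation argument.

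Applying \eqref{rand_dis3} with $r$ replaced by $r+1$ and using the pathwise identity ${}^{(r,s)}X_t = {}^{(r+1,s)}X_t + \Delta X_t^{(r+1)}$, I obtain
\[
\bigl({}^{(r,s)}X_t,\,\Delta X_t^{(r+1)}\bigr)\eqdr\bigl(W+\pibarpinv(v),\,\pibarpinv(v)\bigr)\bigg|_{v=\Gamma_{r+1}/t},
\]
where $W:=X_t^{u,v}+G_t^{+,v}-G_t^{-,u}$ with $u=\wt\Gamma_s/t$ independent of $v$. Hence conditionally on $v$, the jump $j:=\pibarpinv(v)$ is deterministic and ${}^{(r,s)}X_t-a_t\eqdr W+j-a_t$.

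The crux is the pointwise estimate $P(|W+j-a_t|>x\mid v)\ge 1/4$ for every $v$ with $j>4x$. Since $j>4x$ the two disjoint events $\{W-a_t>-3x\}$ and $\{W-a_t<-x-j\}$ each force $|W+j-a_t|>x$, so the estimate is equivalent to bounding the conditional mass of the complementary interval $[-x-j,-3x]$ by $3/4$. Let $m$ be a conditional median of $W-a_t$. If $|m+j|\ge x$, the ordinary median argument gives $P(|W+j-a_t|>x\mid v)\ge 1/2$ directly. In the exceptional band $|m+j|<x$ one has $m\in(-x-j,-3x)$; here I would apply the symmetrisation $2P(|W+j-a_t|>x\mid v)\ge P(|W-W'|>2x\mid v)$ with $W'$ an independent conditional copy of $W$, and recover the anti-concentration $P(|W-W'|>2x\mid v)\ge 1/2$ from the infinitely divisible structure of $W=X_t^{u,v}+G_t^{+,v}-G_t^{-,u}$, using the fact that the L\'evy triplet of the truncated process $X_t^{u,v}$ carries a non-trivial continuous component relative to the window $[-2x,2x]$.

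Integrating the pointwise bound against $P(\Gamma_{r+1}/t\in dv)$ over $\{\pibarpinv(v)>4x\}$ and identifying the resulting integral as $\tfrac14 P(\Gamma_{r+1}<t\pibar^+(4x))=\tfrac14 P(\Delta X_t^{(r+1)}>4x)$ via Lemma \ref{rJ} delivers \eqref{key0.+}. The main obstacle is the exceptional-band case: a plain median argument produces only probability $1/2$ in a single direction, so the factor $1/4$ in the statement must be recovered from a uniform-in-$v$ anti-concentration estimate for $W$, and it is the infinitely divisible structure inherited from the canonical triplet of $X_t$ that makes this estimate feasible.
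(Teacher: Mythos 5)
Your reduction of \eqref{key0.+} to the pointwise conditional estimate $P(|W+j-a_t|>x\mid v)\ge 1/4$ on $\{j>4x\}$ is where the argument breaks, and the rescue you propose for the exceptional band is not available. Conditionally on $(u,v)$, $W=X_t^{u,v}+G_t^{+,v}-G_t^{-,u}$ is the time-$t$ marginal of a L\'evy process with jumps truncated at $\pibarpinv(v)$, so for a fixed truncation level its mean and variance are $O(t)$; both $W$ and the symmetrisation $W-W'$ therefore concentrate near a constant as $t\dto 0$, and the anti-concentration $P(|W-W'|>2x\mid v)\ge 1/2$ is simply false --- infinite divisibility gives no uniform-in-$(t,v)$ lower bound of this kind. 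Worse, since $a_t$ is an arbitrary constant, one can arrange $\pibarpinv(v)-a_t\in(-x,x)$ for $v$ in a set of positive mass inside $\{\pibarpinv(v)>4x\}$ (take, e.g., a stable-$\tfrac12$ subordinator and $a_t=\pibarpinv(v_0)$ with $\pibarpinv(v_0)>4x$); for such $v$ the conditional probability $P(|W+j-a_t|>x\mid v)$ tends to $0$ with $t$ rather than staying above $1/4$. The inequality is only true after integration over $v$, so no proof that routes through a uniform conditional bound can succeed.

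The paper's proof avoids this by never adding the $(r+1)$st jump back. It represents ${}^{(r,s)}X_t$ via \eqref{rand_dis3} at level $v=\Gamma_r/t$ as $W_t^{u,v}$, applies the symmetrisation inequality $P(|W_t^{u,v}-a_t|>x)\ge\tfrac12 P(|\wh W_t^{u,v}|>2x)$ --- valid for \emph{any} constant $a_t$, with no median case-split --- and then L\'evy's maximal inequality for the symmetric process to reach $\tfrac14 P\big(\sup_{0<s\le t}|\Delta\wh W_s^{u,v}|>4x\big)$. That last quantity is a Poisson event for the jumps \emph{retained} by the truncation: its positive part has probability $1-e^{-t\thetabar(4x)}\ge 1-e^{-t\thetabar^+(4x)}$ with $\thetabar^+(4x)=\pibar^+(4x)-v$ when $4x<\pibarpinv(v)$, and integrating this against the law of $\Gamma_r/t$ reproduces exactly $P(\Delta X_t^{(r+1)}>4x)$ by Lemma \ref{rJ}. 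The factor $4$ thus comes from symmetrisation times the maximal inequality, and the mass witnessing the deviation is supplied by the large jumps still present in the trimmed process, not by any anti-concentration of its bulk. Reworking your argument along these lines also handles the negative-jump half of the $\max$ and the modulus case uniformly.
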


\begin{proof}
We first prove \eqref{key0.+}. Assume $\pibar(0+) = \infty$. 
By the representation in \eqref{rand_dis3}, 
\begin{align}\label{keyin:1}
& P\left( \left| {}^{(r,s)}X_t -a_t\right| > x \right) \nonumber \\
=&\int_{u,v} P\left( \left|X^{ u,v}_t+ G_t^{+,v} - G_t^{-,u} - a_t\right| >  x  \right)P(\Gamma_r \in t\rmd v, \wt \Gamma_s \in t\rmd u). 
\end{align}
Define $W^{u,v}_t : = X^{ u,v}_t+ G_t^{+,v} - G_t^{-,u}$. Recall that $\rho_\pm(w) = \pibar^\pm(\pibarpminv(w)-) - w$. 
Then the aggregate L\'evy measure of $(W_t^{u,v})$ is 
\[ \Theta(\rmd x) : = \Pi(\rmd x ){\bf 1}_{-\pibarpinv(u) < x < \pibarpinv(v)} + \rho_+ (v)\delta_{\{\pibarpinv(v)\}} + \rho_- (u) \delta_{\{-\pibarpinv(u)\}}.
\]
 Denote the tails of L\'evy measure $\Theta(\cdot)$ by $ \thetabar^\pm$. Then for $0<x <\pibarpinv(v)$,
 \[ \thetabar^+(x) = \pibar^+(x) - \pibar^+(\pibarpinv(v)-) + \rho_+(v) = \pibar^+(x) - v.
 \] Similarly, for $0<x < \pibarminv(u)$,
 we have 
 \[ \thetabar^-(x) = \pibar^-(x) - u.
 \]
Let $(\bar X_t^{u,v})$, $(\bar G^{\pm, w}_t)$ be processes independent of $(X_t^{u,v})$ and $(G_t^{\pm,w})$ respectively but with the same law. Define the symmetrised process
\[ \wh W_t^{u,v} := X^{ u,v}_t+ G_t^{+,v} - G_t^{-,u}- \left(\bar X^{ u,v}_t+ \bar G_t^{+,v} - \bar G_t^{-,u}\right).
\]
Then the symmetrised process $\wh W_t^{u,v}$ has L\'evy measure $W(\rmd x) = \Theta(\rmd x) + \Theta(- \rmd x)$. 

By the symmetrisation inequality (Lemma 1 in Feller \cite{feller66} p.147) and L\'evy's maximal inequality (Lemma 2 in Feller \cite{feller66} p.147, also see Lemma 1.1 in Fan \cite{fan_thesis}), 
\begin{align}\label{new1}
P\left( \left|X^{ u,v}_t+ G_t^{+,v} - G_t^{-,u} - a_t\right| >  x  \right)
&\ge \frac 1 2 P(|\wh W_t^{u,v}| > 2x) \nonumber \\
&\ge \frac 1 4 P(\sup_{0<s\le t} |\Delta \wh W_s^{u,v}| > 4x)
\end{align}
where $(\Delta \wh W_s^{u,v})_{0<s\le t}$ is the jump process of $\wh W_t^{u,v}$. Denote the positive and negative jump processes of $\wh W_t^{u,v}$ by $(\Delta \wh W_s^{+,u,v})_{0<s\le t}$ and $(\Delta \wh W_s^{-,u,v})_{0<s\le t}$ respectively. Note that they are independent L\'evy processes with the same law. Each of $(\Delta \wh W_s^{+,u,v})_{0<s\le t}$ and $(\Delta \wh W_s^{-,u,v})_{0<s\le t}$ has L\'evy tails $\overline{W}^\pm(\cdot) = \thetabar^+(\cdot) + \thetabar^-(\cdot) = \thetabar(\cdot)$. Hence,
\begin{align}\label{new2}
P\left(\sup_{0<s\le t} \Delta \wh W_s^{+, u,v} > 4x \right) &= P\left(\sup_{0<s\le t} \Delta \wh W_s^{-, u,v} > 4x \right) \nonumber \\
    & = 1-e^{-t\thetabar(4x)} \nonumber \\
    & \ge (1-e^{-t\thetabar^+(4x)}) \vee (1-e^{-t\thetabar^-(4x)}) \nonumber \\
	&= P\left(\sup_{0<s\le t} \Delta W_s^{+,v} > 4x \right) \vee P\left(\sup_{0<s\le t} \Delta W_s^{-,u} > 4x \right),
\end{align} where $(\Delta W_s^{+,v})_{0<s\le t}$ and $(\Delta W_s^{-,u})_{0<s\le t}$ are positive and negative jump processes of $W_t^{u,v}$ and they are independent of each other.
By \eqref{new1} and \eqref{new2}, 
\begin{align*}
 P(\sup_{0<s\le t} |\Delta \wh W_s^{u,v}| > 4x) &\ge P\left(\sup_{0<s\le t} \Delta \wh W_s^{+, u,v} > 4x \right) \nonumber \\
							&\ge  P(\sup_{0<s\le t} \Delta W_s^{+,v} > 4x) \vee P(\sup_{0<s\le t} \Delta W_s^{-,u} > 4x).
\end{align*}
On the set $\{ v < \pibar^+ (4x)\} = \{4x < \pibarpinv(v)\}$, by \eqref{keyin:1} and \eqref{new1}, 
\begin{align}\label{keyin:3}
& 4 P(|^{(r,s)}X_t -a_t| > x ) \nonumber \\
&\ge \int_{0}^{\pibar^+(4x)}\int_{u \in (0,\infty)} P(\sup_{0\le s\le t} \Delta W_s^{+,v} > 4x )P(\Gamma_r \in t\rmd v, \wt \Gamma_s \in t\rmd u) \nonumber \\
&=  \int_{0}^{t\pibar^+(4x)}\int_{u \in (0,\infty)}\left(1- e^{-t(\pibar^+(4x)-v/t)}\right) P(\Gamma_r \in \rmd v, \wt \Gamma_s \in \rmd u)  \nonumber \\
&\ge \int_{0}^{t\pibar^+(4x)} \left(1- e^{-t\pibar^+(4x) + v } \right) P(\Gamma_r \in \rmd v).
\end{align}
The last line of \eqref{keyin:3} is, by \eqref{rJ1}, 
\begin{equation*}\label{keyin:3.5}
  \int_{0}^{t\pibar^+ (4x)} P(\Gamma_r \in \rmd v) -  e^{-t \pibar^+(4x)} \int_{0}^{t\pibar^+(4x)} \frac{v^{r-1}}{(r-1)!} \rmd v = P(\Delta X_t^{(r+1)} > 4x) .
\end{equation*}

Similarly, if we consider the set $\{ u < \pibar^-(4x) \} = \{ 4x < \pibarminv(u) \}$ and replace the integrand in the second line of \eqref{keyin:3} by $ P(\sup_{0\le s\le t} \Delta W_s^{-,u} > 4x )$. The exponent in the third line of \eqref{keyin:3} becomes $-t(\pibar^-(4x) - u/t)$. This leads to $P(\Delta X_t^{(s+1),-} > 4x)$. Hence we complete the proof for \eqref{key0.+}. 
\eqref{keyin0} is proved similarly.

\end{proof}

\section{Proof of Theorems}\label{sect:prf}

\begin{proof}[Proof of Theorem~ \ref{tighteqv} {\rm (a)}:] 
Take $r , s \in \N$. Let $({}^{(r,s)}S_t)$ be tight. 
For each $x > 0$ and $t > 0$, by \eqref{key0.+} in Proposition \ref{keyIn},   
\ben\label{tiger1}
4P\left( \left|{}^{(r,s)}  X_t - a_t\right| >  x b_t\right) \ge \max\left( P\left( {\Delta X}_t^{(r+1)} > 4 x b_t\right),  P\left( {\Delta X}_t^{(s+1),-} > 4 x b_t\right) \right).
\een 
Take $\limsup_{t \dto 0}$ and then $\lim_{x\to \infty}$ to obtain
\begin{align*}\label{tiger2}
0 &=\lim_{x\to \infty}\limsup_{t  \dto 0} 4P\left(\left|{}^{(r,s)}S_t \right| > x \right)  \nonumber \\
   &\ge  \lim_{x\to \infty}\limsup_{t \dto 0}  \max\left( P\left({\Delta X}_{t}^{(r+1)}/b_t > 4 x  \right),  P\left({\Delta X}_{t}^{(s+1),-}/b_t > 4 x  \right) \right).
\end{align*}
This implies that $\left( {\Delta X}_t^{(r+1)}/b_t \right) $ and $\left(  {\Delta X}_t^{(s+1),-}/b_t\right)$  are tight families as $t \dto 0$. 
Hence there exists $x_0 > 0$ such that 
\be\label{tiger2:a}
\limsup_{t\dto 0} P(\Delta X_t^{(r+1)}/b_t > x) \le 1/2 \quad \text{for all } x > x_0.
\ee
For an $x > x_0$, suppose there exists a sequence $\{t_k \} \dto 0$ such that 
$t_k \pibar^+(b_{t_k} x) \to \infty$ as $k \to \infty$. 
Then by \eqref{rJ1},
\be\label{t13}
P(\Delta X_{t_k}^{(r+1)} > b_{t_k} x ) = \int_0^{t_k\pibar^+(b_{t_k}x)} P(\Gamma_{r+1} \in \rmd v) \to 1 \quad \text{as} \quad k \to \infty,
\ee
which contradicts \eqref{tiger2:a}. Therefore  $\limsup_{t\dto 0} t\pibar^+( b_t x) < \infty$
 for each $x >x_0$. By \eqref{rJ2},
\be\label{t12}
0 = \lim_{x\to \infty} \limsup_{t \dto 0} P\left(\Delta X_t^{(r+1)} > b_t x\right) \ge \lim_{x\to \infty} \limsup_{t \dto 0} e^{-t\pibar^+( b_t x)} \frac{(t\pibar^+( b_t x))^{r+1}}{(r+1)!}.
\ee
So we must have that $\lim_{x\to \infty} \limsup_{t\dto 0} t\pibar^+(b_t x) = 0$. By the same reasoning we also have $\lim_{x\to \infty} \limsup_{t\dto 0} t\pibar^-(b_t x) = 0$.

Conversely, assume $\lim_{x \to \infty} \limsup_{t \dto 0} t\pibar^+(b_t x) = 0 $. By \eqref{rJ2}, for each $r \in \N$, $x > 0$,
\[ \lim_{x \to \infty} \limsup_{t \dto 0}P(\Delta X_t^{(r)} > x b_t) \le \lim_{x \to \infty} \limsup_{t \dto 0} \frac{(t\pibar^+( b_tx))^r}{r!} = 0
\] This proves statements \eqref{ordersrp} and \eqref{ordersrm}.

Recall that the sum of tight families is again a tight family. Since 
$ {}^{(s,-)}S_t  = {}^{(r,s)}S_t + \sum_{i=1}^r \Delta X_t^{(i)}/b_t $, ${}^{(s,-)}S_t$ is also tight at $0$. 
Similarly, since ${}^{(r)}S_t = {}^{(r,s)}S_t - \sum_{i=1}^s \Delta X_t^{(i),-}$, we conclude that ${}^{(r)}S_t$ is tight at $0$.
Note that $S_t = {}^{(s,-)}S_t - \sum_{i=1}^s \Delta X_t^{(i),-}/b_t$, thus $S_t$ is tight at $0$. This completes the proof of Part {\rm (a)} and Part {\rm (b)} is proved similarly.
\end{proof}

Before proving the main theorem, we write down a useful lemma to eliminate the easy direction.
\begin{lemma}\label{max1}
If there exists a subsequence $t_k \dto 0$ such that $(X_{t_k}-a_{t_k})/b_{t_k} \topr 0$ as $k \to \infty$ or
$(X_{t_k}-a_{t_k})/b_{t_k} \todr N(0,1)$ as $k \to \infty $,
then $\wt{\Delta X}^{(i)}_{t_k}/b_{t_k} \topr 0$ and ${\Delta X}^{(i),\pm}_{t_k}/b_{t_k} \topr 0$ for $i = 1, 2, 3, \dots$ as $k \to \infty$.
\end{lemma}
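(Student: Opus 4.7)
The plan is that in either limiting regime stated in the hypothesis---convergence in probability to $0$, or convergence in distribution to $N(0,1)$---the limit law is infinitely divisible with vanishing L\'evy measure, so Kallenberg's necessary conditions force the normed positive and negative tails of $\Pi$ to go to zero along the subsequence. Once that is established, the Poissonian upper bounds on the ordered jumps from Lemma \ref{rJ} finish the argument immediately.

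To implement this, recall that by Lemma 4.1 of Maller and Mason \cite{MM08} (cited around \eqref{inf.1}), any a.s.\ finite distributional limit of $(X_{t_k}-a_{t_k})/b_{t_k}$ must be infinitely divisible. Both candidate limits fit this framework: the degenerate distribution at $0$ has characteristic triplet $(0,0,0)$, and $N(0,1)$ has triplet $(0,1,0)$. In particular the L\'evy measure of the limit is the zero measure in both cases, so $\lambar^\pm(x) = 0$ for every $x > 0$, and every positive $x$ is a continuity point of $\lambar^\pm$. Applying Kallenberg's conditions \eqref{cut_KL} along the subsequence $t_k$ then yields
\[
\lim_{k\to\infty} t_k \pibar^\pm(x b_{t_k}) \;=\; \lambar^\pm(x) \;=\; 0 \quad \text{for every } x > 0,
\]
and summing over signs gives $t_k \pibar(x b_{t_k}) \to 0$ for every $x > 0$ as well.

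To finish, fix $i \in \N$ and $x > 0$. The upper bound \eqref{rJ2} from Lemma \ref{rJ} gives
\[
P\!\left(\Delta X_{t_k}^{(i),\pm}/b_{t_k} > x\right) \le \frac{\bigl(t_k \pibar^\pm(x b_{t_k})\bigr)^i}{i!} \longrightarrow 0 \quad \text{as } k \to \infty,
\]
so $\Delta X_{t_k}^{(i),\pm}/b_{t_k} \topr 0$, and the analogous bound from \eqref{rJ3} gives $\wt{\Delta X}_{t_k}^{(i)}/b_{t_k} \topr 0$. There is no real obstacle; the lemma simply reflects the fact that normal or constant limits cannot accommodate jumps on the scale of $b_{t_k}$, and all the work has already been done by the tail identities of Lemma \ref{rJ} together with Kallenberg's theorem.
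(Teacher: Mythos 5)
Your proposal is correct and follows essentially the same route as the paper: both arguments invoke Kallenberg's conditions \eqref{cut_KL} along the subsequence to get $t_k\pibar^{\pm}(xb_{t_k})\to 0$ (valid since the limit law is infinitely divisible with zero L\'evy measure, so every $x>0$ is a continuity point), and then convert this into convergence of the ordered jumps via their Poissonian distribution. The only cosmetic difference is that the paper handles just the largest jump explicitly, via $P(|\wt{\Delta X}^{(1)}_{t_k}|/b_{t_k}>\veps)=1-e^{-t_k\pibar(b_{t_k}\veps)}$, and deduces the rest by monotonicity, whereas you apply the bounds \eqref{rJ2}--\eqref{rJ3} of Lemma \ref{rJ} directly for each $i$.
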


\begin{proof}
Either convergence implies, by \eqref{cut_KL}, that $t_k\pibar(b_{t_k}x) \to 0$ for all $x>0$, and this implies 
\[ P(|\wt{\Delta X}^{(1)}_{t_k}|/b_{t_k} > \veps) = 1 - e^{-t_k\pibar(b_{t_k}\veps)} \to 0
\]
for any $\veps > 0$. Hence $|\wt{\Delta X}^{(1)}_{t_k}|/b_{t_k} \topr 0$. Thus ${\Delta X}^{(i),\pm}_{t_k}/b_{t_k} \topr 0$ for $i =1 ,2, \ldots $ as $k \to \infty$. 
\end{proof}

\begin{proof}[Proof of Theorem~\ref{convNc}:] \
Necessity follows from Lemma \ref{max1}. We shall prove the sufficiency.
Assume \eqref{convNm+}.
If $\sigma^2 >0$, the truncated second moment function $V(x) \ge \sigma^2 > 0$, thus
\[  \frac{x^2 \pibar(x)}{V(x)} \to 0.
\]By \eqref{DN}, this implies $X_t$ is in the domain of attraction of a normal distribution at 0, in which case \eqref{convNm1} holds with $N(0,\sigma^2)$ on the RHS.
But then $\sigma^2 =1$ since the limit distribution is $N(0,1)$. So we can suppose $\sigma^2 =0 $ in what follows.

First we deal with the degenerate limit. Suppose, without loss of generality, the limit distribution is degenerate at $0$. 
Then the LHS of \eqref{key0.+}, with $x$ replaced by $xb_t$, tends to $0$ as $t \dto 0$, so 
\[ \frac{\Delta X_t^{(r+1)}}{b_t} \topr 0  \quad\text{and} \quad
	 \frac{\Delta X_t^{(s+1),-}}{b_t} \topr 0.
\] By \eqref{rJ2}, this implies, for each $x > 0$,
\[ 0 = \lim_{t\dto 0} P(\Delta X_t^{(r+1),\pm} > x b_t) \ge \lim_{t\dto 0}  e^{-t\pibar^\pm(xb_t)} \frac{(t\pibar^\pm (xb_t))^{r+1}}{(r+1)!}.
\]
By a similar argument as in \eqref{t13}--\eqref{t12}, the degeneracy of $^{(r,s)}S_t$ implies 
\[\limsup_{t\dto 0} t\pibar^\pm(xb_t) <\infty \quad \text{for } x > 0.
\] Therefore as $t\dto 0$, ${\lim_{t\dto 0} t\pibar^\pm(4xb_t)  = 0}$ for all $x > 0 $. As in Lemma \ref{max1}, $\Delta X_t^{(i),\pm}/b_t \to 0$, $i = 1,2, \ldots $. Thus the original normed and centered process also converges, that is
\[ S_t =  {}^{(r,s)}S_{t} + \sum_{i=1}^r \frac{\Delta X_t^{(i)}}{b_t} 
					- \sum_{j=1}^s \frac{\Delta X_t^{(j),-}}{b_t} \topr 0.  
\] This completes the proof for the case with a degenerate limit.

Now we concentrate on the non-trivial case where the limit distribution of $^{(r,s)}S_t$ is $N(0,1)$.
Since ${}^{(r,s)}S_t$ is tight at $0$, by Theorem \ref{tighteqv}, $S_t$ is also tight at $0$, which is equivalent to $S_t$ being relatively compact. Therefore every sequence has a further subsequence convergent in distribution.  In fact, $S_t$ is stochastically compact, i.e.  no subsequence could have a degenerate limit in distribution. If this were not so, there would be a subsequence, say $\{t_k\}$, through which ${}^{(r,s)}S_{t_k}$ converged to a degenerate distribution. By Lemma \ref{max1}, $\Delta X^{(1),\pm}_{t_k}/b_{t_k}$ would tend to $0$ in probability, and so the trimmed process $({}^{(r,s)} X_{t_k}-a_{t_k})/b_{t_k}$ would converge to the same degenerate distribution. But this contradicts the assumption that ${}^{(r,s)}S_t \to N(0,1)$ as $t \dto 0$.

Therefore, for each sequence $\{t_k\}$, there exists a further subsequence (also denoted $\{t_k\}$) such that
$(X_{t_k}-a_{t_k}) /b_{t_k}\todr Z$ as $k \to \infty$ for some a.s. finite nondegenerate infinitely divisible random variable $Z$ with canonical triplet $(\alpha_z, \tau_z^2, \Pi_z)$, say.
For each continuity point $x>0$ of $\Pi_z$, by \eqref{cut_KL},
\ben \label{cov*2}
\lim_{k \to \infty}t_k \pibar(b_{t_k}x) = \pibar_z(x) \quad
 \text{ and }\quad
\lim_{k \to \infty} \frac{t_kV(b_{t_k}x)}{b^2_{t_k}}=\tau_z^2+\int_{ |y|\le x} y^2 \Pi_z(\rmd y).
\een
We will show that $\Pi_z(\cdot) \equiv 0$. Suppose not. 
Then the set 
\[ A := \{x : \pibar_z( x) > 0\} \neq \emptyset.
\]
Let the infimum of $A$ be $l > 0$ and $m = \pibar^+_z (l) \wedge \pibar^-_z(l) > 0$.
By the representation in \eqref{rand_dis3},  for any $x > 0$ and $t > 0$,
\begin{align}\label{cov5}
P(^{(r,s)}S_{t_k} > x)&= \int_{u, v \in (0, \infty)} P(Z_{t_k}^{u,v} > x) P(\Gamma_r \in \rmd v, \wt \Gamma_s \in \rmd u)  \nonumber \\
                &\ge \int_{u, v \in (0, m)} P(Z_{t_k}^{u,v} > x) P(\Gamma_r \in \rmd v, \wt \Gamma_s \in \rmd u) ,
\end{align} where 
\[ Z_t^{u,v} := \frac{X_t^{v/t, u/t}+ G^{+,v/t}-G^{-,u/t}- a_t}{b_t}, \quad \text{defined in } \eqref{def2} .
\]
By Lemma \ref{cut_tie}, along a further subsequence of $\{t_k\}$ (still denoted $\{t_k\}$), we have $Z^{u,v}_{t_k} \todr Y^{u,v}$ for each $u,v \in (0,m)$ as $k \to \infty$ where  $Y^{u,v}$ is an infinitely divisible distribution 
with support including the set $(-\pibarminv_z(u), \pibarpinv_z(v))_*$. 
Take $k \to \infty$ on both sides of \eqref{cov5} and apply Fatou's lemma to get
\begin{align}\label{cov5*}
\lim_{k \to \infty}P(^{(r,s)}S_{t_k} > x)&\ge \int_{u,v \in (0, m)} \liminf_{ k \to \infty}  P(Z_{t_k}^{u,v} > x) P(\Gamma_r \in \rmd v, \wt \Gamma_s \in \rmd u)\nonumber \\
                &= \int_{u,v \in (0,m)} P(Y^{u,v} > x) P(\Gamma_r \in \rmd v, \wt \Gamma_s \in \rmd u).
\end{align} 

Let $U_t$ be any L\'evy process with L\'evy measure $\Pi_U$. Define the support of $\Pi_U$ by $S_{\Pi_U}$ and let $c = \inf \{ a > 0 :  S_{\Pi_U} \subset \{ x: |x| \le a\}  \}$. By Sato \cite{sato99} (Theorem 26.1, p.168), for any $\delta > 1/c$ and any $t > 0$, the tail probability of $U_t$ behaves as
\be\label{tailes}
 e^{\delta x \log x }P(|U_t| > x) \to \infty \quad \text{as} \quad x \to \infty. 
\ee
For $u,v \in (0,m)$, $ l = \pibarpinv_z(v) \wedge \pibarminv(u) $. 
Thus $l$ is in the support of the L\'evy measure of $Y^{u,v}$ and $1/l \ge 1/ \pibarinv_z(u)\vee 1/\pibarinv_z(v)$.  We can apply the tail estimate in \eqref{tailes} to $Y^{u,v}$ to get 
\begin{equation}\label{cov10}
\lim_{x \to \infty}  e^{x \log x/l }P(|Y^{u,v}| >x) = \infty.
\end{equation}
It follows from Egorov's theorem that there exists a subset $E$ of the interval $(0,m)$ with positive Lebesgue measure such that \eqref{cov10} holds uniformly on $E$. Multiply $ e^{x\log x /l }$ on both sides of \eqref{cov5*}. Then the modified RHS of \eqref{cov5*} tends to infinity as $x \to \infty$, while the modified LHS of \eqref{cov5*} converges to zero as a result of the estimate
\ben\label{cov11}
   e^{x \log x /l} (2\pi)^{\frac 1 2}\int_x^\infty e^{-y^2/2} \rmd y \le  e^{x\log x/l} O(e^{-x^2/2})
        \to 0 \text{  as } x \to \infty.
\een 
This contradiction proves that $\Pi_z(\cdot) \equiv 0$ and therefore $Z$ is Gaussian.
This means that $Z$ is $N(0,\tau'^2)$ for some $\tau'^2 >0$ (else $Z$ would be degenerate, which case we eliminated earlier). Here we use $'$ to indicate that $\tau'$ depends on the chosen subsequence.
We have shown that for each sequence, there exists a subsequence ${t'}$ such that $S_{t'}\to N(0,\tau'^2)$.
By the assumption in \eqref{convNm+}, we have through this subsequence that $({}^{(r,s)}X_{t'}-a_{t'})/b_{t'}\to N(0,1) $. This forces $\tau'^2 = 1$. 
Since this is true for all subsequences, we have completed the proof for the case when the limit distribution is normal. The proof for ${}^{(r)}\wt X_t$ follows similarly.
\end{proof}

\section{Related Results}\label{sect:aux}
Recall that the quadratic variation process of $X_t$ is defined as $V_t : = \sigma^2 t +  \sum_{s\le t} (\Delta X_s)^2$, and let the trimmed versions of $V_t$ be \[{}^{(r,s)}V_t : = V_t - \sum_{i=1}^r (\Delta X_t^{(i)})^2 -\sum_{j=1}^s (\Delta X_t^{(j),-})^2 \quad {\rm and} \quad
{}^{(r)}\wt V_t : = V_t - \sum_{i=1}^r (\wt{\Delta X}_t^{(i)})^2,
\]
 respectively corresponding to asymmetrical and modulus trimming. We can deduce from Theorem \ref{convNc} the following relationships between the trimmed quadratic variation processes and the untrimmed version.

\begin{corollary}\label{qv}
Under the assumptions of Theorem \ref{convNc}, for any $r,s \in \N$, $b_t > 0$ and $\tau^2 > 0 $,  as $t\dto 0$, 
\begin{equation}\label{qv1}
\qquad \frac{^{(r,s)}V_t}{b^2_t} \topr \tau^2  \quad {\rm or } \quad \frac{^{(r)} \wt V_t}{b^2_t} \topr \tau^2 \quad \text{if and only if} \quad \qquad \frac{V_t}{b^2_t} \topr \tau^2.
\end{equation}
Furthermore, \eqref{qv1} is equivalent to the existence of $a_t \in \R$, $b_t >0$ such that
\be\label{don}
\frac{X_t-a_t}{b_t} \todr N(0, \tau^2), \quad \text{as} \quad t \dto 0.
\ee
The $b_t$ in \eqref{qv1} and \eqref{don} can be chosen to be the same functions.
\end{corollary}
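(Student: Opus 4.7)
The plan is to establish the chain of equivalences
\[
\frac{X_t - a_t}{b_t} \todr N(0,\tau^2) \iff \frac{V_t}{b_t^2} \topr \tau^2 \iff \frac{{}^{(r)}\wt V_t}{b_t^2} \topr \tau^2 \iff \frac{{}^{(r,s)}V_t}{b_t^2} \topr \tau^2.
\]
The key observation is that $V_t$ is itself a subordinator, with drift $\sigma^2$ and L\'evy measure whose tail at $y > 0$ equals $\pibar(\sqrt y)$, so the paper's framework applies to $V_t$ directly. The first equivalence is the classical Kallenberg--Doney--Maller characterisation applied to $V_t$: convergence $V_t/b_t^2 \topr \tau^2$ to a positive constant is equivalent to $t\pibar(xb_t) \to 0$ for every $x > 0$ together with $tV(b_t)/b_t^2 \to \tau^2$, and by \eqref{cut_KL} and \eqref{DN} these are exactly the conditions for \eqref{don} with the same norming $b_t$ and a centering such as $a_t = t\nu(b_t)$. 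For the second equivalence, the ordered jumps of the subordinator $V_t$ are precisely $(\wt{\Delta X}_t^{(i)})^2$, so $^{(r)}\wt V_t$ coincides with the top-$r$ trimming of $V_t$ in the sense of the paper; the degenerate-limit case of Theorem \ref{convNc} applied to $V_t$ with centering $\tau^2 b_t^2$ and norming $b_t^2$ then yields the stated equivalence.

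The third equivalence is the crux. The forward direction is immediate from Lemma \ref{max1}: under \eqref{don} each $\Delta X_t^{(i),\pm}/b_t \topr 0$, whence $V_t - {}^{(r,s)}V_t = \sum_{i=1}^r(\Delta X_t^{(i)})^2 + \sum_{j=1}^s(\Delta X_t^{(j),-})^2 = o_p(b_t^2)$. For the converse, assume ${}^{(r,s)}V_t/b_t^2 \topr \tau^2$. The bounds $(\Delta X_t^{(r+1)})^2 \le {}^{(r,s)}V_t$ and $(\Delta X_t^{(s+1),-})^2 \le {}^{(r,s)}V_t$ force $\Delta X_t^{(r+1)}/b_t$ and $\Delta X_t^{(s+1),-}/b_t$ to be tight, and the argument in the proof of Theorem \ref{tighteqv} then yields $\lim_{x\to\infty}\limsup_{t\dto 0} t\pibar^\pm(xb_t) = 0$.

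The hard step is upgrading this to the pointwise limit $t\pibar^\pm(xb_t) \to 0$ for each fixed $x > 0$, which I plan to handle by a Poisson-counting contradiction. Suppose $t_k\pibar^+(x_0 b_{t_k}) \to c > 0$ along some subsequence for some $x_0 > 0$. The number of positive jumps of $X_{t_k}$ up to time $t_k$ exceeding $x_0 b_{t_k}$ is Poisson with mean tending to $c$, so exceeds $r+m$ with probability at least $P(\mathrm{Poi}(c)\ge r+m) > 0$ for any integer $m$. On that event the $(r+1)$-st through $(r+m)$-th largest positive jumps all exceed $x_0 b_{t_k}$, so $^{(r,s)}V_{t_k}/b_{t_k}^2 \ge m x_0^2$; choosing $m$ with $m x_0^2 > \tau^2$ contradicts convergence in probability to $\tau^2$. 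Hence $t\pibar^\pm(xb_t) \to 0$ for every $x > 0$, which via \eqref{rJ2} forces $\Delta X_t^{(i),\pm}/b_t \topr 0$ for every $i$, and $V_t/b_t^2 = {}^{(r,s)}V_t/b_t^2 + o_p(1) \topr \tau^2$ completes the proof. The main obstacle is precisely this promotion step: mere tightness of a few normed ordered jumps does not on its own rule out a small but persistent intensity of large jumps, and the argument has to exploit both the degeneracy of the limit $\tau^2$ and the Poisson accumulation of residual untrimmed large jumps to drive the contradiction.
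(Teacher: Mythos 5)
Your proposal is correct, and for the modulus-trimmed part and for the equivalence with \eqref{don} it follows the paper's own route: view $V_t$ as a subordinator with L\'evy tail $\pibar_q(y)=\pibar(\sqrt{y})$, apply the degenerate case of Theorem \ref{convNc} with centering $0$ and norming $b_t^2$, and match the Kallenberg conditions for $X$ and for $V$. Where you genuinely diverge is the asymmetrically trimmed quadratic variation. The paper disposes of ${}^{(r,s)}V_t$ by the same one-line appeal to Theorem \ref{convNc}, but that appeal is only literally valid for ${}^{(r)}\wt V_t$: the jumps removed in ${}^{(r,s)}V_t$ are selected by the sign and rank of the jumps of $X$, not by their rank as jumps of the subordinator $V$ (which has no negative jumps), so ${}^{(r,s)}V_t$ is not a trimmed process of $V$ in the sense of \eqref{trims}. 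Your separate treatment --- Lemma \ref{max1} for the easy direction, and for the converse the lower bound ${}^{(r,s)}V_t\ge(\Delta X_t^{(r+1)})^2\vee(\Delta X_t^{(s+1),-})^2$ followed by the Poisson-counting contradiction (a persistent intensity $c>0$ of jumps above $x_0b_t$ would, with probability bounded below by $P(\mathrm{Poi}(c)\ge r+m)>0$, leave $m$ untrimmed jumps contributing more than $mx_0^2b_t^2>\tau^2b_t^2$ to ${}^{(r,s)}V_t$) --- is sound and actually closes this gap; note that your preliminary tightness step is subsumed by the counting argument, which also rules out subsequences with $t_k\pibar^{\pm}(x_0b_{t_k})\to\infty$. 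In short, the paper's proof is terser because it leans entirely on the already-proved Theorem \ref{convNc}, while yours is self-contained precisely at the one point where that citation does not quite reach.
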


\begin{proof} [Proof of Corollary~\ref{qv}:]
The quadratic variation process of $X_t$ with triplet $(\gamma, \sigma^2, \Pi)$ is a L\'evy subordinator with drift $\sigma^2$ and L\'evy measure $\Pi_q$ where $\pibar_q (x) = \pibar(\sqrt{x})$ for each $x > 0$. Apply Theorem \ref{convNc} to $V_t$ with centering function $0$ and norming function $b^2_t$ to get necessity. Sufficiency is a consequence of Lemma \ref{max1}. This completes the proof of \eqref{qv1}.

The second statement comes from applying the Kallenberg convergence criterion \eqref{cut_KL} for subordinators, which gives that \eqref{don} holds if and only if for each $x >0$, as $t\dto 0$, 
\be\label{don2}
 t\pibar(x b_t) \to 0 \quad \text{and} \quad \frac{tV(xb_t)}{b^2_t} \to \tau^2;
\ee
also that $V_t/b^2_t \topr \tau^2$ holds if and only if for each $x > 0$, as $t \dto 0$,
\be\label{don_s}
  t\pibar_q(x b^2_t) \to 0 \quad \text{and} \quad \frac{t}{b^2_t} \int_{0\le |y| \le xb_t^2 } y \Pi_q(\rmd y) \to \tau^2.
\ee 
Observe that $t\pibar_q(x b_t^2) = t\pibar(\sqrt{x} b_t)$ and 
\[ \frac{t}{b^2_t} \int_{0\le y \le xb_t^2 } y \Pi_q(\rmd y) = \frac{t}{b^2_t} \int_{0\le |y| \le \sqrt{x}b_t } y^2 \Pi(\rmd y) = \frac{tV(\sqrt{x}b_t)}{b^2_t}.
\] Hence the two conditions in \eqref{don2} and \eqref{don_s} are equivalent. This completes the proof. 
\end{proof}

The next corollary gives a subsequential version of Theorem \ref{convNc}. We say that $X_t$ is in the \emph{domain of partial attraction of the normal distribution} if there exist sequences $t_k \dto 0$,  $a_k \in \R$ and $b_k > 0$ such that 
\be\label{DoPN}
\frac{X_{t_k} - a_k}{b_k} \to N(0,1).
\ee A necessary and sufficient condition for \eqref{DoPN} is that 
\[ \liminf_{t \dto 0} \frac{x^2\pibar(x)}{V(x)} = 0.
\]
\begin{corollary}\label{sub_CN}
Assume $\pibar(0+) = \infty$. \eqref{DoPN} holds if and only if, for any $r, s \in \N$, there exist sequences  $t_k' \dto 0$, $a_k'$ and $b_k' > 0$  such that 
\be\label{DoPN_1}
\frac{^{(r,s)}X_{t_k'} - a_k'}{b_k'} \to N(0,1), \quad \text{as}\quad k \to \infty,
\ee
or, equivalently, 
\be\label{DoPN_2}
\frac{^{(r)}\wt X_{t_k'} - a_k'}{b_k'} \to N(0,1), \quad \text{as}\quad k \to \infty.
\ee

\end{corollary}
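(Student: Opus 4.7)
Necessity is immediate. If \eqref{DoPN} holds along $t_k \dto 0$ with norming $b_k$, then Lemma \ref{max1} gives $\Delta X_{t_k}^{(i),\pm}/b_k \topr 0$ for every $i$, and consequently
\[
\frac{^{(r,s)}X_{t_k} - a_k}{b_k} = \frac{X_{t_k} - a_k}{b_k} - \sum_{i=1}^r \frac{\Delta X_{t_k}^{(i)}}{b_k} + \sum_{j=1}^s \frac{\Delta X_{t_k}^{(j),-}}{b_k} \todr N(0,1),
\]
so $t_k' := t_k$, $a_k' := a_k$, $b_k' := b_k$ witnesses \eqref{DoPN_1}; the modulus version is identical.

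For sufficiency, assume \eqref{DoPN_1} along $t_k' \dto 0$. The plan is to rerun the proof of Theorem \ref{convNc} subsequentially. First I would verify that the argument of Theorem \ref{tighteqv}(a) is purely subsequential: the key inequality \eqref{key0.+}, applied at $t = t_k'$ with $x$ replaced by $xb_k'$, together with $\lim_{x\to\infty}\limsup_{k\to\infty} P(|^{(r,s)}S_{t_k'}|>x)=0$, forces $\limsup_{k\to\infty} t_k' \pibar^{\pm}(x b_k') = 0$ for every $x>0$, and hence tightness of $\Delta X_{t_k'}^{(i),\pm}/b_k'$ for all $i$. The identity $S_{t_k'} = {}^{(r,s)}S_{t_k'} + \sum_{i=1}^r \Delta X_{t_k'}^{(i)}/b_k' - \sum_{j=1}^s \Delta X_{t_k'}^{(j),-}/b_k'$ then yields tightness of $(S_{t_k'})$ along $t_k'$. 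By Prokhorov, a further subsequence $t_k'' \dto 0$ gives $S_{t_k''} \todr Z$, and by Lemma~4.1 of \cite{MM08}, $Z$ is infinitely divisible with some triplet $(\alpha_z,\tau_z^2,\Pi_z)$.

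Next I would identify $Z$. Degeneracy of $Z$ is ruled out exactly as in the proof of Theorem \ref{convNc}: were $Z$ degenerate, Lemma \ref{max1} would force ${}^{(r,s)}S_{t_k''}$ to share the same degenerate limit, contradicting \eqref{DoPN_1}. To rule out a nontrivial jump component I would replay the Sato tail-estimate argument verbatim: assuming $\Pi_z \neq 0$, Lemma \ref{cut_tie} (which is already formulated for an arbitrary sequence $t_n \dto 0$ along which $(X_{t_n}-a_{t_n})/b_{t_n}$ converges and hence applies with $t_n = t_k''$) produces, along a further subsequence, infinitely divisible limits $Y^{u,v}$ of $Z_{t_k''}^{u,v}$ for $u,v$ ranging in some positive Lebesgue set; integrating the resulting heavy tail estimate \eqref{cov10} against $P(\Gamma_r\in\rmd v,\wt\Gamma_s\in\rmd u)$ via Fatou and Egorov contradicts the Gaussian decay of $\lim_k P(^{(r,s)}S_{t_k''}>x)$. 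Hence $\Pi_z\equiv 0$ and $Z = N(0,\tau^2)$ with $\tau^2>0$. Applying Lemma \ref{max1} one more time gives $^{(r,s)}S_{t_k''}\todr N(0,\tau^2)$, and comparison with \eqref{DoPN_1} restricted to $t_k''$ forces $\tau^2=1$, which is exactly \eqref{DoPN} along $t_k''$.

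The main conceptual point (rather than obstacle) is to notice that every ingredient used in Theorem \ref{convNc} is purely sequential: the key inequality, Lemma \ref{cut_tie}, and the Sato tail bound all survive the replacement of $t\dto 0$ by a subsequence $t_k'\dto 0$. The only thing worth double-checking is the application of Lemma \ref{cut_tie}, since its hypothesis requires an infinitely divisible limit of $(X_{t_n}-a_{t_n})/b_{t_n}$; this is precisely what Prokhorov plus the tightness step produces along $t_k''$. Once this is in place, the proof for ${}^{(r)}\wt X_{t_k'}$ runs identically using \eqref{keyin0}, \eqref{2rrep_1}, and the modulus version of Lemma \ref{cut_tie}.
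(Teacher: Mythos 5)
Your proposal is correct and follows the same route as the paper: necessity via Lemma \ref{max1} with the same sequences, and sufficiency by rerunning the proof of Theorem \ref{convNc} (tightness from the key inequality, Prokhorov, exclusion of degenerate and non-Gaussian subsequential limits via Lemma \ref{cut_tie} and the Sato tail estimate) restricted to the given subsequence. The paper states this only as a one-line reduction to Theorem \ref{convNc}; you have simply made explicit the verification that every ingredient there is purely sequential, which is exactly the intended argument.
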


 \begin{proof}
That \eqref{DoPN} implies \eqref{DoPN_1} or \eqref{DoPN_2} is obvious by Lemma \ref{max1}. In this case we can choose the same sequences, i.e., $(t_k') = (t_k)$, $(a_k') = (a_k)$ and $(b_k') = (b_k)$. For the converse, write ${}^{(r,s)}S_{t_k'} = ({}^{(r,s)}X_{t_k'} -a_k')/b_k'$. The convergence of ${}^{(r,s)}S_{t_k'} \todr N(0,1)$ as $k \to \infty$ implies 
the convergence of  $S_{t_k'} \todr N(0,1)$ as $k \to \infty$ can be proved similarly as that of Theorem \ref{convNc} by restricting to a particular subsequence. The same norming and centering sequence can be used. 
\eqref{DoPN_2} implies \eqref{DoPN} can be proved similarly.
\end{proof}

Next, we will give two easy corollaries with degenerate limit distributions.
\begin{corollary}\label{deri0} (Weak Derivative at $0$)
Suppose $\pibar(0+) = \infty$ and $r,s \in \N$. As $t\dto 0$, we have
\begin{equation}\label{deri1}
\frac{X_t}{t} \to \delta \quad \text{if and only if} \quad \frac{{}^{(r,s)}X_t}{t} \to \delta  \quad \text{or} \quad \frac{{}^{(r)}\wt X_t}{t} \to \delta,
\end{equation}
or equivalently as $x \to 0$,
\be\label{deri2}
\sigma^2 =0, \quad x\pibar(x)\to 0,\quad \text{and} \quad \nu(x) \to \delta.
\ee
\end{corollary}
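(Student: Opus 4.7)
The plan is to apply Theorem \ref{convNc} directly with the choice $a_t = \delta t$ and $b_t = t$. Under this choice, $(X_t-a_t)/b_t = X_t/t - \delta$, so convergence in probability to $\delta$ of $X_t/t$ is the same as convergence in distribution to the degenerate law at $0$; the analogous statement holds for ${}^{(r,s)}X_t/t$ and ${}^{(r)}\wt X_t/t$. Theorem \ref{convNc} then yields the chain of equivalences asserted in \eqref{deri1}. Note that the hypothesis $\pibar(0+)=\infty$ (so that, in particular, $V(x)>0$ for all $x>0$) makes the theorem applicable.

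To obtain the analytic characterisation \eqref{deri2}, I would apply the Kallenberg criterion \eqref{cut_KL} to $(X_t-\delta t)/t \topr 0$. Since a degenerate limit has canonical triplet $(0,0,0)$, Kallenberg's conditions reduce to three requirements:
\begin{equation*}
\text{(a) } t\pibar^\pm(xt)\to 0,\qquad \text{(b) } V(xt)/t\to 0,\qquad \text{(c) } \nu(t)-\delta\to 0,
\end{equation*}
for each $x>0$ as $t\dto 0$. The substitution $y=xt$ converts (a) into $y\pibar(y)\to 0$ as $y\dto 0$. Since $V(y)\ge \sigma^2$ for every $y>0$, condition (b) forces $\sigma^2=0$. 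Condition (c) is literally $\nu(t)\to\delta$.

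It remains to note that, given $\sigma^2=0$ and $y\pibar(y)\to 0$, condition (b) holds automatically. This follows from the integration-by-parts identity (modulo atoms)
\begin{equation*}
V(y) \;=\; 2\int_0^y s\,\pibar(s)\,\rmd s \;-\; y^2\pibar(y),
\end{equation*}
which, upon dividing by $y$, exhibits $V(y)/y$ as the difference of $y\pibar(y)\to 0$ and an average of $s\pibar(s)\to 0$, hence tends to $0$.

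I do not anticipate any real obstacle. The statement is essentially Theorem \ref{convNc} specialised to $b_t=t$ and $a_t=\delta t$, packaged with Kallenberg's small-time criterion; the only mildly technical step is the reduction of the truncated-variance condition (b) to $\sigma^2=0$ together with $x\pibar(x)\to 0$, which is a routine integration-by-parts.
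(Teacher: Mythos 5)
Your proposal is correct, and the core of it — specialise Theorem \ref{convNc} to $a_t=\delta t$, $b_t=t$ and use the degenerate branch of that theorem — is exactly what the paper does. Where you diverge is in the second half: the paper disposes of the equivalence between the untrimmed convergence $X_t/t\to\delta$ and the analytic conditions \eqref{deri2} by citing Theorem 2.1 of Doney and Maller \cite{dm02}, whereas you rederive it from Kallenberg's criterion \eqref{cut_KL} with the triplet $(0,0,0)$. Your derivation is sound: with $b_t=t$ the three conditions become $t\pibar(xt)\to 0$, $V(xt)/t\to 0$ and $\nu(t)\to\delta$; the first is $y\pibar(y)\to 0$ after the substitution $y=xt$; the second forces $\sigma^2=0$ since $V\ge\sigma^2$; and the converse implication (that $\sigma^2=0$ together with $y\pibar(y)\to 0$ already gives $V(y)/y\to 0$) follows from the identity $V(y)=2\int_0^y s\,\pibar(s)\,\rmd s-y^2\pibar(y)$, which in fact holds exactly (a Fubini argument, valid even in the presence of atoms, so your ``modulo atoms'' hedge is unnecessary). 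The only thing your route buys is self-containment; the only thing it costs is the half page the paper saves by citation. Both arguments rely on Theorem \ref{convNc} being an equivalence for the \emph{same} centering and norming functions, which its proof indeed establishes, so the specialisation is legitimate.
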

If $X$ is a subordinator, $\delta = \rmd_X$ is the drift coefficient.

\begin{corollary}\label{rs0} (Relative Stability)
Suppose $\pibar(0+) = \infty$ and $r,s \in \N$. As $t\dto 0$, there exists a norming function $b_t \dto 0$ such that
\begin{equation}\label{rs1}
\frac{X_t}{b_t} \to 1 \quad \text{if and only if} \quad \frac{{}^{(r,s)}X_t}{b_t} \to 1  \quad \text{or} \quad \frac{{}^{(r)}\wt X_t}{b_t} \to 1,
\end{equation}
or equivalently as $x \to 0$,
\be\label{rs2}
\sigma^2 =0, \quad \text{and} \quad  \frac{\nu(x)}{x\pibar(x)} \to \infty.
\ee Furthermore, $b_t$ is regularly varying with index $1$.
\end{corollary}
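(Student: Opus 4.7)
The plan is to reduce relative stability of the trimmed processes to that of $X_t$ itself via Theorem \ref{convNc}, and then to identify the untrimmed statement with the analytic condition \eqref{rs2} using Kallenberg's convergence criteria and basic regular variation theory.

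First I apply Theorem \ref{convNc} with the centering choice $a_t = b_t$. The convergence $X_t/b_t \to 1$ is then exactly $(X_t - b_t)/b_t \topr 0$, i.e.\ convergence to the degenerate distribution at $0$. Theorem \ref{convNc} gives the equivalence with $({}^{(r,s)}X_t - b_t)/b_t \topr 0$ and with $({}^{(r)}\wt X_t - b_t)/b_t \topr 0$, which rearrange to the remaining statements in \eqref{rs1}. The same $b_t$ serves all three formulations.

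Next, to identify $X_t/b_t \to 1$ with \eqref{rs2}, I invoke Kallenberg's criteria \eqref{cut_KL} applied to the limit triplet $(1,0,0)$. This yields, for every continuity point $x > 0$: $t\pibar^\pm(xb_t) \to 0$, $tV(xb_t)/b_t^2 \to 0$, and $t\nu(b_t)/b_t \to 1$. The third condition is $b_t \sim t\nu(b_t)$; combined with the first this gives
\[
\frac{\nu(b_t)}{b_t\pibar(b_t)} = \frac{1}{t\pibar(b_t)}\bigl(1 + o(1)\bigr) \to \infty,
\]
and a monotonicity argument (as in Doney--Maller \cite{dm02}) extends this to the pointwise statement $\nu(x)/(x\pibar(x)) \to \infty$ as $x \dto 0$. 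The condition $\nu(x)/(x\pibar(x)) \to \infty$ forces $\nu$ to be slowly varying at $0$ via a standard monotone-density / integration-by-parts argument; inverting $b_t = t\nu(b_t)$ then yields $b_t \sim tL(t)$ with $L$ slowly varying, i.e.\ $b_t$ is regularly varying of index $1$. In particular $b_t = o(\sqrt t)$, hence $t\sigma^2/b_t^2 \to \infty$ whenever $\sigma^2 > 0$; but $V(x) \ge \sigma^2$ and the second Kallenberg condition force $t\sigma^2/b_t^2 \to 0$, so $\sigma^2 = 0$. Conversely, starting from \eqref{rs2} one defines $b_t$ implicitly through $b_t = t\nu(b_t)$ (existence of $b_t \dto 0$ for small $t$ follows from slow variation of $\nu$) and verifies the three Kallenberg conditions directly.

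The main obstacle is the careful passage from an along-the-sequence estimate $\nu(b_t)/(b_t\pibar(b_t)) \to \infty$ to the pointwise asymptotic $\nu(x)/(x\pibar(x)) \to \infty$ as $x \dto 0$, and the concomitant identification of $\nu$ as slowly varying; these are classical but nontrivial applications of regular variation theory. Once that is in hand, both the killing of $\sigma^2$ and the regular variation of $b_t$ with index $1$ follow as consequences of the Kallenberg conditions.
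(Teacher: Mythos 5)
Your proposal is correct and follows essentially the same route as the paper: apply Theorem \ref{convNc} with the centering $a_t=b_t$ and a degenerate limit to pass between the trimmed and untrimmed statements in \eqref{rs1}. The only difference is that the paper simply cites Theorem 2.2 of Doney and Maller \cite{dm02} for the equivalence of the untrimmed version of \eqref{rs1} with \eqref{rs2} and for the regular variation of $b_t$, whereas you sketch that derivation from Kallenberg's criteria \eqref{cut_KL}; your sketch is consistent with the cited result.
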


\begin{proof} [Proof of Corollaries~\ref{deri0} and \ref{rs0} :]  These are simple consequences of Theorem \ref{convNc} with degenerate limits.
That the untrimmed version of \eqref{deri1} is equivalent to \eqref{deri2} is proved in Theorem 2.1 of	 Doney and Maller \cite{dm02}.
The equivalence of the untrimmed version of \eqref{rs1} and \eqref{rs2} is proved in Theorem 2.2 of Doney and Maller \cite{dm02}.
\end{proof}

\noindent {\bf Concluding Remarks}
Besides trimming of a bounded number of jumps, there is also theoretical interest in more general trimming where the number of jumps taken away goes to infinity.
The theory of intermediate and heavy trimming is more complex and requires, in general, quite different techniques. The proofs in this paper will not extend immediately to intermediate or heavy trimming cases. In order to tackle those problems, we need arguments along the lines of, for example, Griffin and Pruitt \cite{GriPru1987}, Griffin and Mason \cite{GriMas1991},  Cs\"org\H{o}, Haeusler and Mason (\cite{CHM1988a}, \cite{CHM1991}). Note that in the asymptotic normality case, by Griffin and Pruitt \cite{GriPru1987}, it is essential to restrict to a symmetric marginal distribution. Also see Berkes and Horv\'ath \cite{BH} for more recent developments on trimmed sums.

In the small time paradigm, we zoom in to focus on the hierarchy of the very
small jumps. This promises a fresh perspective in seeking out potential applications. There is an increasing volume of papers from other fields such as physics, chemistry and modern finance, with focal points on instantaneous behaviours of a process. For example, Harris et al. \cite{harris2012} inspect the molecular movement in the blood stream of a certain protein; Zheng et al. \cite{zhengetal2013} study the small time movement of self-propelled Janus particles in a fluid; A\"it-Sahalia and Jacod (e.g.\cite{aitJacod2009}) compute the activity index for highly frequently traded financial data. In particular, and in many other application areas, ``L\'evy flights''€ (processes with heavy tailed increment distributions) are found to accurately describe many physical processes, see for example Davis and Marshak \cite{DavisMarshak1997} on scattering of photons. With increasing power in measurement precision and better data analysis tools, the need for local investigation could become more substantial.


\bigskip
\noindent{\bf Acknowledgments}
The author is very grateful to Prof. Ross Maller and Dr. Boris Buchmann for many helpful discussions and critically reading the manuscript.


\bibliography{Library_Levy}

\begin{thebibliography}{10}

\bibitem{aitJacod2009}
Y.~A\"{i}t-Sahalia and J.~Jacod.
\newblock Estimating the degree of activity of jumps in high frequency data.
\newblock {\em \AOS}, 37(5A):2202--2244, 2009.

\bibitem{BN1977}
O.~Barndorff-Nielsen.
\newblock Exponentially decreasing distributions for the logarithm of particle
  size.
\newblock {\em \RSLA}, 353(1674):401--419, 1977.

\bibitem{BN1998}
O.~Barndorff-Nielsen.
\newblock Processes of normal inverse {G}aussian type.
\newblock {\em Finance Stoch.}, 2:41--68, 1998.

\bibitem{BH}
I.~Berkes and L.~Horv\'ath.
\newblock The central limit theorem for sums of trimmed variables with heavy
  tails.
\newblock {\em \SPA}, 122:449--465, 2012.

\bibitem{BGT87}
N.~H. Bingham, C.~M. Goldie, and J.~L. Teugels.
\newblock {\em Regular variation}.
\newblock Encyclopedia of Mathematics and its Applications, 27. Cambridge
  University Press, 1987.

\bibitem{bfm14}
B.~Buchmann, Y.~Fan, and R.~A. Maller.
\newblock Distributional representations and dominance of a {L}\'evy process
  over its maximal jump processes.
\newblock {\em Bernoulli, to appear}, 2015.

\bibitem{caballero2008}
M.~Caballero, J.~Pardo, and J.~P\'erez.
\newblock On {L}amperti stable processes.
\newblock {\em Probab. Math. Statist.}, 30(1):1--28, 2010.

\bibitem{CHM1988a}
S.~Cs\"{o}rg\H{o}, E.~Haeusler, and D.~M. Mason.
\newblock A probabilistic approach to the asymptotic distribution of sums of
  independent, identically distributed random variables.
\newblock {\em Adv. in Appl. Math.}, 9(3):259--333, Sept. 1988.

\bibitem{CHM1991}
S.~Cs\"org\H{o}, E.~Haeusler, and D.~M. Mason.
\newblock The quantile-transform-empirical-process approach to limit theorems
  for sums of order statistics.
\newblock In M.~Hahn, D.~Mason, and D.~Weiner, editors, {\em Sums, trimmed sums
  and extremes}, volume~23 of {\em Prog. Probab.}, pages 215--267. Birkh\"auser
  Boston, 1991.

\bibitem{darling52}
D.~Darling.
\newblock The influence of the maximum term in the addition of independent
  random variables.
\newblock {\em Trans. Amer. Math. Soc.}, 73:95--107, 1952.

\bibitem{DavisMarshak1997}
A.~Davis and A.~Marshak.
\newblock {L}\'evy kinetics in slab geometry: scaling of transmission
  probability.
\newblock In M.~M. Novak and T.~G. Dewey, editors, {\em Fractals Frontiers},
  pages 63--72. World Scientific (Singapore), 1997.

\bibitem{deW2003}
F.~de~Weert.
\newblock {\em Attraction to Stable Distributions for {L}\'evy Processes at
  Zero}.
\newblock M. Phil Thesis, Univ. of Manchester, 2003.

\bibitem{dm02}
R.~A. Doney and R.~A. Maller.
\newblock Stability and attraction to normality for {L}\'evy processes at zero
  and at infinity.
\newblock {\em J. Theoret. Probab.}, 15(3):751--792, 2002.

\bibitem{fan2014st}
Y.~Fan.
\newblock Convergence to trimmed stable random variables at small times.
\newblock {\em Stochastic Process. Appl.}, 125(10):3691--3724, 2015.

\bibitem{fan_thesis}
Y.~Fan.
\newblock {\em A Study on Trimmed {L}\'evy Processes}.
\newblock PhD Thesis. The Australian National University, 2015.

\bibitem{feller66}
W.~Feller.
\newblock {\em An introduction to probability theory and its applications. Vol.
  II}.
\newblock John Wiley \& Sons, Inc., 1966.

\bibitem{GriMas1991}
P.~S. Griffin and D.~M. Mason.
\newblock On the asymptotic normality of self-normalized sums.
\newblock {\em \MPCPS}, 109(3):597--610, 1991.

\bibitem{GriPru1987}
P.~S. Griffin and W.~E. Pruitt.
\newblock The central limit problem for trimmed sums.
\newblock {\em \MPCPS}, 102(2):329--349, 1987.

\bibitem{hall78}
P.~Hall.
\newblock On the extreme terms of a sample from the domain of attraction of a
  stable law.
\newblock {\em J. London Math. Soc.}, 18:181--191, 1978.

\bibitem{harris2012}
T.~H. Harris, E.~J. Banigan, D.~A. Christian, C.~Konradt, E.~D. Tait~Wojno,
  K.~Norose, E.~H. Wilson, B.~John, W.~Weninger, A.~D. Luster, A.~J. Liu, and
  C.~A. Hunter.
\newblock Generalized {L}\'evy walks and the role of chemokines in migration of
  effector $cd8^+$ {T} cells.
\newblock {\em Nature}, 486:546--549, 2012.

\bibitem{kal02}
O.~Kallenberg.
\newblock {\em Foundations of modern probability. Second edition}.
\newblock Springer-Verlag, New York, 2002.

\bibitem{kesten93}
H.~Kesten.
\newblock Convergence in distribution of lightly trimmed and untrimmed sums are
  equivalent.
\newblock {\em \MPCPS}, 113(3):615--638, 1993.

\bibitem{mal82}
R.~A. Maller.
\newblock Asymptotic normality of lightly trimmed means--a converse.
\newblock {\em \MPCPS}, 92(3):535--545, 1982.

\bibitem{MM08}
R.~A. Maller and D.~M. Mason.
\newblock Convergence in distribution of {L}\'evy processes at small times with
  self-normalization.
\newblock {\em Acta. Sci. Math. (Szeged)}, 74(1-2):315--347, 2008.

\bibitem{MM2010}
R.~A. Maller and D.~M. Mason.
\newblock Small-time compactness and convergence behavior of deterministically
  and self-normalised {L}\'evy processes.
\newblock {\em Trans. Amer. Math. Soc.}, 362(4):2205--2248, 2010.

\bibitem{mori84}
T.~Mori.
\newblock On the limit distributions of lightly trimmed sums.
\newblock {\em \MPCPS}, 96(3):507--516, 1984.

\bibitem{res87}
S.~I. Resnick.
\newblock {\em Extreme values, regular variation, and point processes}.
\newblock Applied Probability. A Series of the Applied Probability Trust, 4.
  Springer-Verlag, New York, 1987.

\bibitem{ros2001}
J.~Rosi\'nski.
\newblock Series representation of {L}\'evy processes from the perspective of
  point processes.
\newblock In O.~Barndorff-Nielsen, T.~Mikosch, and S.~Resnick, editors, {\em
  {L}\'evy Processes: Theory and Applications}. Birkh\"{a}user Boston, 2001.

\bibitem{sato99}
K.-I. Sato.
\newblock {\em {L}\'evy processes and infinitely divisible distributions}.
\newblock Cambridge Studies in Advanced Mathematics, 68. Cambridge University
  Press, Cambridge, 1999.

\bibitem{zhengetal2013}
X.~Zheng, B.~Hagen, A.~Kaiser, M.~Wu, H.~Cui, Z.~Silber-Li, and H.~L\"{o}wen.
\newblock Non-{G}aussian statistics for the motion of self-propelled {J}anus
  particles: experiment versus theory.
\newblock {\em Phys. Rev. E Stat. Nonlin. Soft Matter Phys.}, 88(3).

\end{thebibliography}
\bibliographystyle{abbrv}

\end{document}